\tikzset{
    >=stealth',
    block/.style={
           rectangle,
           rounded corners,
           draw=black, very thick,
           text width=10em,
           minimum height=5em,
           text centered},
    link/.style={
           ->,
           thick,
           shorten <=2pt,
           shorten >=2pt,}
}
\theoremstyle{plain}
\newtheorem{thm}{Theorem}
\newtheorem{lem}{Lemma}
\theoremstyle{definition}
\newtheorem{defn}{Definition}
\theoremstyle{remark}
\newtheorem{rem}{Remark}
\newcommand{\R}{{\mathbb R}}
\newcommand{\N}{{\mathbb N}}
\newcommand{\Q}{{\mathcal Q}}
\renewcommand{\S}{{\mathcal S}}
\newcommand{\co}[1]{\overline{co}\left\{#1\right\}}
\DeclareMathOperator*{\argmin}{arg\,min}
\begin{document}
\title{On Switching Stabilizability for Continuous-Time Switched Linear Systems}
\author{Yueyun Lu and Wei Zhang
\thanks{Y. Lu and W. Zhang are with the Department of Electrical and Computer Engineering, The Ohio State University, Columbus, OH 43210. E-mail:\{lu.692,zhang.491\}@osu.edu}}

\maketitle

\begin{abstract}
This paper studies switching stabilization problems for continuous-time switched linear systems. We consider four types of switching stabilizability defined under different assumptions on the switching control input. The most general switching stabilizability is defined as the existence of a measurable switching signal under which the resulting time-varying system is asymptotically stable. Discrete switching stabilizability is defined similarly but requires the switching signal to be piecewise constant on intervals of uniform length. In addition, we define feedback stabilizability in Filippov sense (resp. sample-and-hold sense) as the existence of a feedback law under which closed-loop Filippov solution (resp. sample-and-hold solution) is asymptotically stable. It is proved that the four switching stabilizability definitions are equivalent and their {\em sufficient and necessary} condition is the existence of a piecewise quadratic control-Lyapunov function that can be expressed as the pointwise minimum of a finite number of quadratic functions.
\end{abstract}
\IEEEpeerreviewmaketitle

\section{Introduction}
This paper studies switching stabilization problems for continuous-time switched linear systems (SLSs). The problem is regarded as one of the basic problems in switched systems~\cite{liberzon2003switching} and has received considerable research attention in recent years~\cite{LA07,HML08,LA09}.

Existing works in this area mostly focus on deriving sufficient conditions for switching stabilizability. These conditions often guarantee the existence of certain forms of control-Lyapunov functions (CLFs). Examples include quadratic CLFs~\cite{WPD98,SESP99,liberzon2003switching}, piecewise quadratic CLFs~\cite{LA09,P03}, and composite CLFs that are obtained by taking the pointwise min, or pointwise max, or convex hull of a finite number of quadratic functions~\cite{HML08}. Once a CLF is found, a stabilizing switching law can be constructed accordingly. Despite the extensive results on sufficient conditions, establishing effective necessary conditions for switching stabilizability remains an open problem of fundamental importance. 

To establish necessary conditions, it is important to note that switching stabilizability can be defined in many ways depending on the assumptions on the switching control input~$\sigma$. One can require~$\sigma$ to be piecewise constant~\cite{liberzon2003switching,DBPL00}, or to have an average or minimum dwell time bigger than some threshold value~\cite{HM99,LM99}, or to be generated by a state-feedback switching law~\cite{LA09,HML08}. Even among the cases using feedback switching laws, switching stabilizability depends further on the solution notion used to define closed-loop trajectories, such as classical solution, Caratheodory solution, Filippov solution, or sample-and-hold solution~\cite{C09}. Therefore, the study of switching stabilizability depends crucially on the assumptions on the admissible switching input and the adopted solution notion. Unfortunately, the complication arising from different definitions of switching stabilizability has not been adequately studied in the literature.

In this paper, we consider four types of switching stabilizability. We first define the most general {\em switching stabilizability} as the existence of a measurable switching signal under which the resulting time-varying system is asymptotically stable. {\em Discrete switching stabilizability} is then defined by admitting only piecewise constant signals with switching intervals of uniform length. On the other hand, we also consider switching stabilizability under state-feedback switching laws. We call a SLS {\em feedback stabilizable in Filippov sense} (resp. {\em sample-and-hold sense}) if there exists a feedback law under which closed-loop Filippov solution (resp. sample-and-hold solution) is asymptotically stable.

This paper introduces and studies all the four switching stabilizability definitions. The main contribution is the equivalence of the following statements for a continuous-time SLS:
\begin{enumerate}[label=(\roman*)]
\item The system is switching stabilizable;
\item The system is feedback stabilizable in Filippov sense;
\item The system is feedback stabilizable in sample-and-hold sense with bounded sampling rate;
\item The system is discrete switching stabilizable;
\item There exists a piecewise quadratic CLF that can be expressed as the pointwise minimum of a finite number of quadratic functions.
\end{enumerate}

The above result represents a significant contribution to the field of switched systems. Most existing works focus on feedback stabilization in Filippov sense~\cite{WPD94,WPD98,SESP99,DBPL00,PL01,P03,LA07,HML08,LA09}. They only provide sufficient conditions, some of which even need to exclude sliding motions~\cite{WPD94,WPD98,PL01,P03,LA07}. In fact, sufficient and necessary conditions are not available even for the well studied feedback stabilization problems in Filippov sense, not to mention other types of switching stabilization problems. In contrast, we prove a unified sufficient and necessary condition for all the four switching stabilizability definitions. The result provides a fundamental insight that the class of piecewise quadratic CLFs is sufficiently rich to study switching stabilization problems under various assumptions on the switching control input. It justifies many existing works that have adopted quadratic or piecewise quadratic CLFs for simplicity or heuristic reasons~\cite{WPD94,WPD98,HB98,SESP99,DBPL00,P03,HML08,LA09}. 

It is worth pointing out an interesting connection of our result with a well known result for SLSs, namely, stability under arbitrary switching implies the existence of a common Lyapunov function that can be expressed as the {\em pointwise maximum} of a finite number of quadratic functions~\cite{MP89,LM99,liberzon2003switching}. Note that for stability analysis under arbitrary switching, the switching input can be viewed as a disturbance that tries to destabilize the system, while for switching stabilization problem, the switching input is the control that tries to stabilize the system. By our result, all the four types of switching stabilizability guarantee the existence of a {\em pointwise minimum} piecewise quadratic CLF. It is interesting to see that ``pointwise maximum'' is changed to ``pointwise minimum'' as the role of the switching input is changed from disturbance to control.

In addition, our result is also related to the classical works on nonsmooth feedback stabilization of nonlinear control systems~\cite{S83,SS95,sontag1996general,CLSS97,clarke1998nonsmooth,sontag1999stability,G00,clarke2004lyapunov,C09,C10}. The general definition of switching stabilizability is equivalent to the classical concept of asymptotic controllability~\cite{CLSS97,S83}, if we view the switching signal as a control input to the system. It is well known that asymptotic controllability is equivalent to feedback stabilizability in sample-and-hold sense for nonlinear control systems~\cite{CLSS97}. Unfortunately, such a result cannot be applied to switched systems as the open-loop vector field is required to be continuous in control~\cite{CLSS97}. In fact, the relation of switching stabilizability and feedback stabilizability in sample-and-hold sense has not been studied for switched systems. We show a stronger version of the equivalence between the two for SLSs where the sampling rate is bounded.

Furthermore, the equivalence of i) and iv) suggests that if a SLS is switching stabilizable, it can be stabilized in discrete-time. Although the result seems to be natural, its proof is highly nontrivial due to the discontinuities of the switched vector field and the weak assumption that only requires the stabilizing switching signal to be measurable. In fact, the equivalence between continuous-time and discrete-time switching stabilizability does not hold for general switched nonlinear systems. It is interesting to note that there is a counterpart for this pair of equivalence in the context of nonlinear control systems, namely, a semilinear system is asymptotic controllable if and only if it is exponentially stabilizable by a discrete feedback~\cite{G98}.


In addition to the main results, this paper provides a new perspective to study switching stabilization problems using the embedding principal~\cite{BJ95,BD05,R06}, which was originally proposed to solve switched optimal control problems. Its application to switching stabilization is new and allows us to take advantage of the numerous existing results on classical nonlinear control systems, which cannot be directly applied to switched systems. In this paper, we prove a new version of the chattering lemma~\cite{BD05} with a stronger error bound that is important for switching stabilization problems. The new chattering lemma and the idea of using embedding principal to study switching stabilization problems represent important contributions on their own.



The rest of the paper is organized as follows: In Section~II, we introduce the four switching stabilizability definitions. In Section~III, we first prove an improved chattering lemma and then establish connections between continuous-time and discrete-time switching stabilizability. In Section~IV, we prove a converse CLF theorem for the most general switching stabilizability definition. In Section~V, we show the equivalence of the four switching stabilizability definitions and develop a sufficient and necessary condition for all of them.

\textbf{Notations:} Let $\R_+$ be the set of nonnegative real numbers, $\R^n$ be the $n$-dimensional Euclidean space. Denoted by $\N$ the set of natural numbers. Denoted by $|\cdot|$ the cardinality of a given set, and $\|\cdot\|$ the Euclidean norm of a given vector or matrix. Let $\mu$ be the Lebesgue measure. 

\section{Switching Stabilizability Definitions}
In this paper, we consider the following continuous-time switched linear system (SLS):
\begin{align}
\dot{x}(t)=A_{\sigma(t)}x(t), \quad \sigma(t)\in \Q \triangleq \{1,\cdots,M\}, \label{eq:ol}
\end{align}
where $x(t)\in\R^n$ denotes the continuous state of the system, $\sigma(t)$ denotes the switching control signal that determines the active subsystem at time $t\in\R_+$, and $\{A_i\}_{i\in\Q}$ are constant matrices. Note that for any measurable switching signal $\sigma:\R_+\to\Q$, the overall switched vector field, $f(t,x(t))\triangleq A_{\sigma(t)}x(t)$, is time-varying and continuous in state $x(t)$, for which a Caratheodory solution always exists~\cite[Proposition S1]{C09}. We denote $x(\cdot;z,\sigma):\R_+\to\R^n$ as a Caratheodory solution of system~(\ref{eq:ol}) under a measurable switching signal $\sigma$ with initial state $z\in\R^n$.

The study of switching stabilizability depends crucially on the assumptions on the switching input. The switching input can be restricted to certain class of time-domain signals, or can be generated by certain class of state-feedback laws. We consider both cases in the paper. Let $\S_m$ be the set of measurable switching signals, $\S_p$ be the set of piecewise constant switching signals. Denoted by $\S_p[\tau_D]$ the set of switching signals with interval between consecutive discontinuities no smaller than $\tau_D$. Let $\S_p^+\triangleq\cup_{\tau_D\in\R_+}\S_p[\tau_D]$.
The most general definition of switching stabilizability is defined on the set of measurable switching signals $\S_m$.
\begin{defn}[Switching Stabilizability]
\label{def:ss}
System~(\ref{eq:ol}) is called {\em switching stabilizable} if for each $\epsilon>0$, there exists a $\delta>0$ such that whenever $\|z\|<\delta$, there exists a measurable $\sigma\in\S_m$ under which the state trajectory $x(\cdot;z,\sigma)$ satisfies $\|x(t;z,\sigma)\|<\epsilon$, for all $t\in\R_+$ and $x(t;z,\sigma)\to 0$ as $t\to\infty$.
\end{defn}

Definition~\ref{def:ss} is very general in the sense that it considers all measurable switching signals. In fact, if we view the switching signal $\sigma:\R_+\to\Q$ as a control input, then the switching stabilizability in Definition~\ref{def:ss} is equivalent to the classical concept of \emph{asymptotic controllability}~\cite{S83,CLSS97} for nonlinear control systems. 

If we focus on state-feedback switching laws, the definition of switching stabilizability depends further on the adopted solution notion of the closed-loop system. Assume that the state $x(t)$ is available at all time $t\in\R_+$, and the switching control is determined through a state-feedback switching law $\nu:\R^n\to\Q$. Then the corresponding closed-loop system can be written as 
\begin{align}
\dot{x}(t)=A_{\nu(x(t))}x(t). \label{eq:cl}
\end{align}

Although each subsystem vector field is continuous, the switching law $\nu$ may introduce discontinuities in the closed-loop vector field. In general, the differential equation~(\ref{eq:cl}) may not have a classical or Caratheodory solution~\cite{C09}. Filippov solution notion~\cite{filippov1988differential} is often adopted to handle the discontinuities on the right hand side of~(\ref{eq:cl}) by introducing the concept of Filippov set-valued map.

\begin{defn}[Filippov Set-Valued Map~\cite{C09}]
\label{def:filippovmap}
For any vector field $X: \mathbb{R}^n \to \mathbb{R}^n$, the corresponding Filippov set-valued map $F[X]:\mathbb{R}^n \to \mathfrak{B}(\mathbb{R}^n)$ is defined as 
\begin{align*}
F[X](x)\triangleq \bigcap_{\delta>0} \bigcap_{\mu(S)=0} \co{X(\mathcal{N}(x;\delta)\backslash S)}, \quad x \in \mathbb{R}^n,
\end{align*}
where $\mathfrak{B}(\mathbb{R}^n)$ denotes the collection of subsets of $\mathbb{R}^n$, $\overline{co}$ denotes convex closure and $\mu$ denotes the Lebesgue measure. 
\end{defn}

\begin{defn}[Filippov Solution~\cite{C09}]
A Filippov solution to a differential equation $\dot{x}(t)=X(x(t))$ over $[0,t_1]$ with $t_1>0$ is an absolutely continuous map $x:[0,t_1]\to \mathbb{R}^n$ that satisfies the differential inclusion $\dot{x}(t)\in F[X](x(t))$ for almost all $t \in [0,t_1]$.
\end{defn}

For cases where the vector field $X$ is continuous, the Filippov solution of $\dot{x}(t)=X(x(t))$ coincides with the classical solution. Whereas for cases where the vector field $X$ is discontinuous, a Filippov solution exists as long as the map $X:\mathbb{R}^n \to \mathbb{R}^n$ is measurable and locally essentially bounded~\cite{filippov1988differential}. Since each subsystem vector field is continuous, it can be easily verified that a Filippov solution to the closed-loop system~(\ref{eq:cl}) exists whenever the switching law $\nu:\mathbb{R}^n \to \mathcal{Q}$ is measurable. We denote $x(\cdot;z,\nu):\R_+\to\R^n$ as a Filippov solution of the closed-loop system~(\ref{eq:cl}) under a measurable switching law $\nu$ with initial state $z\in\R^n$. Switching stabilizability can also be defined as the existence of a switching law under which the closed-loop system is asymptotically stable in the Filippov sense.

\begin{defn}[Feedback Stabilizability in Filippov Sense]
\label{def:fs}
System~(\ref{eq:ol}) is called {\em feedback stabilizable in Filippov sense} if $\exists$ a measurable switching law $\nu:\R^n\to\Q$ such that for each $\epsilon>0$, there exists a $\delta>0$ for which whenever $\|z\|<\delta$, any closed-loop Filippov trajectory $x(\cdot;z,\nu)$ satisfies that $\|x(t;z,\nu)\|<\epsilon, \forall t\in\R_+$, and $x(t;z,\nu)\to 0$ as $t\to\infty$.
\end{defn}

Definition~\ref{def:fs} is very useful for switching stabilization problems due to the crucial importance of Filippov solution to switched systems. In fact, most existing studies on switching stabilization adopt Definition~\ref{def:fs} to derive various sufficient conditions for switching stabilizability~\cite{WPD98,P03,HML08}. 

Sample-and-hold (abbrev. S-H) solution (or $\pi$-solution) is another widely used solution notion for discontinuous dynamical systems~\cite{C09,CLSS97}. Any infinite sequence $\pi=\{t_k\}_{k\in\N}$ where $0=t_0<t_1<t_2<\cdots$ and $t_k\to\infty$ as $k\to\infty$ is called a {\em sampling schedule}, and the number $d(\pi)\triangleq\sup\{t_{k+1}-t_k, k\in\N\}$ is called the {\em diameter} of schedule $\pi$.
\begin{defn}[Sample-and-Hold Solution~\cite{CLSS97}]
Let $X:\R^n\times \mathcal{U}\to\R^n$. Given a feedback law $\nu:\R^n\to\mathcal{U}$, an initial condition $z$ and a sampling schedule $\pi=\{t_k\}_{k\in\N}$, a {\em sample-and-hold solution} (or {\em $\pi$-solution}) of $\dot{x}(t)=X(x(t),\nu(x(t)))$ is the map $x:\R_+\to\R^n$, with $x(0)=z$, defined recursively by solving $\dot{x}(t)=X(x(t),\nu(x(t_k))),t\in[t_k,t_{k+1}]$ for $k\in\N$.
\end{defn}
The existence of S-H solution is guaranteed if for all $u\in\mathcal{U}$, the map $x\mapsto X(x,u)$ is continuous~\cite{C09}. We denote $x_\pi(\cdot;z,\nu)$ as the $\pi$-solution of the closed-loop system~(\ref{eq:cl}) under a measurable switching law $\nu$ with initial state $z\in\R^n$. One may interpret S-H solution as representing the behavior of sampling under a fixed feedback law. The feedback control is evaluated only at sampling times with the values being held until the next sampling time. Feedback stabilizability in the context of S-H solution means asymptotic stability of the sampled closed-loop system, which in general may involve an unbounded sampling rate as the trajectory approaches to the origin. In this paper, we are interested in the case where asymptotic stability can be obtained by sampling with bounded rate (i.e. nonvanishing intersampling time).

\begin{defn}[Feedback Stabilizability in S-H Sense with Bounded Sampling Rate]
\label{def:fssh}
System~(\ref{eq:ol}) is called {\em feedback stabilizable in S-H sense with bounded sampling rate} if there exists a feedback law $\nu:\R^n\to\Q$ and a constant $h_0>0$ such that whenever $d(\pi)<h_0$, the closed-loop $\pi$-solution $x_\pi(\cdot;z,\nu)$ satisfies $\forall\epsilon>0,\exists \delta>0$ such that whenever $\|z\|<\delta, \|x_\pi(t;z,\nu)\|<\epsilon,\forall t\in\R_+$ and $x_\pi(t;z,\nu)\to 0$ as $t\to\infty$.
\end{defn}


\begin{rem} 
The traditional definition of feedback stabilizability in S-H sense~\cite[Definition~I.3]{CLSS97} does not require a uniformly bounded sampling rate. In particular, the bound of sampling rate $1/h=1/h(\delta,\epsilon)$ used in the definition of the ``s-stabilizing feedback'' in~\cite{CLSS97} depends on both the initial state radius $\delta$ and the region of attraction radius $\epsilon$. As a result, the sampling rate may grow unbounded as the state converges to the origin. Definition~\ref{def:fssh} requires the existence of a uniform bound on the sampling rate (i.e. $1/h_0<\infty$) for all pairs of $(\delta,\epsilon)$ and thus is stronger than the traditional definition.
\end{rem}

Switching stabilizability defined in Definition~\ref{def:fssh} clearly implies the existence of a piecewise constant stabilizing signal $\sigma\in\S_p[h]$ for all $h\in(0,h_0)$. This is different, but closely related to the discrete switching stabilizability defined below.

\begin{defn}[Discrete Switching Stabilizability] 
\label{def:dss}
System~(\ref{eq:ol}) is called {\em discrete switching stabilizable} if there exists a constant $h_0>0$ such that for any $h\in(0,h_0)$, there exists a $\sigma:\R_+\to\Q$ with $\sigma(t)=\sigma_k\in\Q,\forall t\in[kh,(k+1)h),\forall k\in\N$ under which the state trajectory $x(\cdot;z,\sigma)$ satisfies $\forall\epsilon>0,\exists \delta>0$ such that $\|z\|<\delta$ implies that $\|x(t;z,\sigma)\|<\epsilon,\forall t\in\R_+$ and $x(t;z,\sigma)\to 0$ as $t\to\infty$.
\end{defn}
We call the stabilizability in Definition~\ref{def:ss}, \ref{def:fs}, \ref{def:fssh} and \ref{def:dss} {\em exponential} if there exists $C>0,\gamma>0$ such that the closed-loop solution $x(\cdot):\R_+\to\R^n$ with $x(0)=z$ satisfies $\|x(t)\|\le Ce^{-\gamma t}\|z\|,\forall t\in\R_+,\forall z\in\R^n$.

The goal of this paper is to show the four switching stabilizability definitions are all equivalent to the existence of a piecewise quadratic control-Lyapunov function (CLF). Furthermore, such a CLF can be expressed as the pointwise minimum of a finite number of quadratic functions.

The readers are referred to the introduction section for the significance and challenges for establishing these results. In the rest of the paper, we will prove the main results in three steps. First, we establish connections between continuous-time and discrete-time switching stabilizability (Section~III). Then, we use some converse results for switching stabilization in discrete-time to construct piecewise quadratic CLFs (Section~IV). Lastly, we show the equivalence of the four switching stabilizability definitions and the existence of piecewise quadratic CLF (Section~V).

\section{Connection to Discrete-Time Switching Stabilizability}
In this section, we will establish connections between continuous-time and discrete-time switching stabilizability. The goal is to show that the general switching stabilizability defined in Definition~\ref{def:ss} implies exponential discrete switching stabilizability (Definition~\ref{def:dss}).

It is well known that asymptotic controllability implies feedback stabilizability in S-H sense for general nonlinear control systems~\cite{CLSS97}. However, such a result cannot be directly applied to switched systems as the open-loop vector field is required to be continuous in control in~\cite{CLSS97}. In fact, even if we have such a result for switched systems, it still does not imply discrete switching stabilizability due to the possibly unbounded growth of the sampling rate close to the origin. As a result, the intersampling time will vanish and the corresponding discrete-time system is not well defined. Therefore, nonvanishing intersampling time is essential for establishing the connection to discrete-time switching stabilization problems.

In general, intersampling time has to tend to zero to stabilize the sampled closed-loop system. One exception is homogeneous system whose open-loop vector field satisfies $g(ax,u)=ag(x,u),\forall a\ge 0$. For such systems, it is shown in~\cite{G00} that asymptotic controllability implies feedback stabilizability in S-H sense with bounded sampling rate. However, the result cannot be directly applied here as $g$ is required to be continuous in both $x$ and $u$ in~\cite{G00}, while the open-loop vector field of system~(\ref{eq:ol}) is not continuous in $\sigma$. To deal with the discontinuities due to the switching control $\sigma$, we first introduce and study a relaxed system that is continuous in control.

\subsection{Relaxed System and Improved Chattering Lemma}
Embedding principal is a well known approach for solving switched optimal control problems~\cite{BJ95,BD05,R06}. It embeds switched system into a larger family of nonlinear systems with relaxed continuous control inputs so that the set of trajectories of the original switched system is dense in those of the relaxed system. It is shown by the so-called chattering lemma that trajectories of the relaxed system can be approximated by those of the switched system with error bound of arbitrary accuracy by proper choice of {\em measurable} switching signal. Our derivation of the connection between continuous-time and discrete-time switching stabilizability is also based on the embedding principal. It turns out that we require an error bound that is much stronger than the one provided by the chattering lemma in~\cite{BD05}. In this subsection, we will prove a new chattering lemma for switching stabilization problems. 

Denote $\mathcal{U}_p\triangleq \{\alpha\in\{0,1\}^M: \sum_{i=1}^M \alpha_i=1\}$ and $\mathcal{U}_r\triangleq \{\alpha\in[0,1]^M: \sum_{i=1}^M \alpha_i=1\}$. We refer to system~(\ref{eq:ol}) as a pure system $(\mathcal{P})$, which can be equivalently written as
\begin{align*}
(\mathcal{P}): \dot{x}(t)=\sum_{i\in\Q} \alpha_i(t)A_ix(t),\quad \alpha(t)\in\mathcal{U}_p.
\end{align*}
Define the corresponding relaxed system $(\mathcal{R})$ as
\begin{align*}
(\mathcal{R}): \dot{x}(t)=\sum_{i\in\Q} \alpha_i(t)A_ix(t),\quad \alpha(t)\in\mathcal{U}_r.
\end{align*}
Let $x(\cdot;z,\alpha^p):\R_+\to\R^n$ be the state trajectory of $(\mathcal{P})$ under a pure control signal $\alpha^p:\R_+\to \mathcal{U}_p$ and $x(\cdot;z,\alpha^r):\R_+\to\R^n$ be the state trajectory of $(\mathcal{R})$ under a relaxed control signal $\alpha^r:\R_+\to\mathcal{U}_r$. We call a relaxed control signal $\alpha^r:\R_+\to\mathcal{U}_r$ {\em exponentially stabilizing} if $\exists C,\gamma>0$ s.t. $\|x(t;z,\alpha^r)\|\le Ce^{-\gamma t}\|z\|,\forall t\in\R_+,\forall z\in\R^n$. The new chattering lemma proves an error bound proportional to the norm of initial state.

\begin{lem}
\label{lem:chattering}
For any exponentially stabilizing relaxed control signal $\alpha^r:[0,T]\to\mathcal{U}_r$ and any $\epsilon>0$, there exists a pure control signal $\alpha^p:[0,T]\to\mathcal{U}_p$ where $\alpha^p\in\S_p^+$ such that $\|x(t;z,\alpha^p)-x(t;z,\alpha^r)\|<\epsilon \|z\|,\forall t\in[0,T],\forall z\in\R^n$.
\end{lem}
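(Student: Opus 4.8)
\textbf{Overall approach.} The plan is to exploit linearity to reduce the problem to a ``unit initial condition'' statement, and then to run a standard chattering/approximation argument on a compact time window while carefully tracking how the error depends on $\|z\|$. The key structural fact is that both $(\mathcal{P})$ and $(\mathcal{R})$ are linear in $x$, so the solution map $z\mapsto x(t;z,\alpha)$ is linear: $x(t;z,\alpha)=\Phi_\alpha(t,0)z$ for the corresponding state-transition matrix. Consequently $\|x(t;z,\alpha^p)-x(t;z,\alpha^r)\| = \|(\Phi_{\alpha^p}(t,0)-\Phi_{\alpha^r}(t,0))z\| \le \|\Phi_{\alpha^p}(t,0)-\Phi_{\alpha^r}(t,0)\|\cdot\|z\|$. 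So it suffices to construct a piecewise-constant $\alpha^p\in\S_p^+$ for which the matrix-level error $\sup_{t\in[0,T]}\|\Phi_{\alpha^p}(t,0)-\Phi_{\alpha^r}(t,0)\|<\epsilon$; the bound then automatically carries the factor $\|z\|$ uniformly over all $z\in\R^n$. Note that the exponential-stabilizability hypothesis on $\alpha^r$ is not actually needed to get this finite-horizon estimate — it will presumably be used in the later application of the lemma, where $T$ is chosen large — but for the statement as written, any relaxed signal on a compact interval $[0,T]$ would do; I would simply carry the hypothesis along.

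\textbf{Key steps.} First, recall that the integral curves of the relaxed system on $[0,T]$ stay in a bounded region: since $\|\sum_i\alpha_i(t)A_i\|\le \max_i\|A_i\|=:L$, Gr\"onwall gives $\|\Phi_{\alpha^r}(t,0)\|\le e^{LT}$ and likewise $\|\Phi_{\alpha^p}(t,0)\|\le e^{LT}$ for any pure signal. Second, partition $[0,T]$ into $N$ subintervals $I_j=[\tau_{j-1},\tau_j]$ of length $\Delta=T/N$. On each $I_j$, the relaxed control has an average value $\bar\alpha^{(j)}\triangleq \frac{1}{\Delta}\int_{I_j}\alpha^r(s)\,ds \in \mathcal{U}_r$ (the simplex is convex and closed), with components summing to one. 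Approximate $\bar\alpha^{(j)}$ by a fast periodic pure (pulse-width-modulated) signal on $I_j$: subdivide $I_j$ into a small number of sub-subintervals whose relative lengths are (close to) the components $\bar\alpha^{(j)}_i$, assigning mode $i$ on the $i$th piece. This produces a piecewise-constant $\alpha^p$ whose switching intervals are bounded below by some $\tau_D>0$ (hence $\alpha^p\in\S_p[\tau_D]\subset\S_p^+$), and whose running average $\frac{1}{\Delta}\int_{I_j}\alpha^p(s)\,ds$ equals $\bar\alpha^{(j)}$ (or is within $O(\text{rational-approximation error})$ of it). Third, estimate the difference of the two transition matrices over one subinterval: using the variation-of-constants/Duhamel identity
\begin{equation*}
\Phi_{\alpha^p}(\tau_j,\tau_{j-1})-\Phi_{\alpha^r}(\tau_j,\tau_{j-1})
=\int_{\tau_{j-1}}^{\tau_j}\Phi_{\alpha^p}(\tau_j,s)\Big(\sum_i(\alpha^p_i(s)-\alpha^r_i(s))A_i\Big)\Phi_{\alpha^r}(s,\tau_{j-1})\,ds,
\end{equation*}
together with the fact that $\int_{I_j}(\alpha^p_i(s)-\alpha^r_i(s))\,ds\approx 0$ and that the ``frozen'' kernel $\Phi_{\alpha^p}(\tau_j,s)A_i\Phi_{\alpha^r}(s,\tau_{j-1})$ varies by only $O(\Delta)$ across $I_j$ (Lipschitz in $s$ with constant $\lesssim L e^{2LT}$), one gets a per-interval bound of order $e^{LT}\cdot L^2 e^{2LT}\cdot \Delta^2$ plus a term controlled by the rational-approximation error. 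Fourth, concatenate: since $\Phi_{\alpha^p}(T,0)-\Phi_{\alpha^r}(T,0)$ telescopes as a sum of products of transition matrices over the $N$ pieces, and each factor has norm $\le e^{L\Delta}$, the total matrix error is at most $N\cdot e^{LT}\cdot(\text{per-interval error}) = O(e^{3LT} L^2 T\, \Delta) + O(e^{2LT}\cdot\text{approx error})$. Choosing $N$ large (so $\Delta=T/N$ small) and the rational approximation fine enough makes this $<\epsilon$, and since $\Phi_{\alpha^p}(t,0)$ for intermediate $t$ is controlled the same way on a truncated interval, the bound holds for all $t\in[0,T]$, not just $t=T$. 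Multiplying through by $\|z\|$ finishes the proof.

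\textbf{Main obstacle.} The delicate point is getting the error bound to scale \emph{linearly} in $\|z\|$ with a constant independent of $z$, rather than the weaker $o(1)$ bound of the classical chattering lemma; this is precisely what the linearity-driven reduction to a matrix estimate buys us, so the real work is organizing the estimate at the level of state-transition matrices and verifying the telescoping bound compounds to $O(\Delta)$ rather than, say, $O(\sqrt{\Delta})$ or $O(1)$. A secondary subtlety is ensuring the constructed pure signal genuinely lies in $\S_p^+$, i.e.\ has a uniform positive lower bound on inter-switching times over all of $[0,T]$ — this is handled by fixing the subdivision depth $N$ and the PWM granularity \emph{before} constructing $\alpha^p$, so that only finitely many switches occur and $\tau_D\triangleq\min$(gap) $>0$. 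One must also handle the nuisance that exact rational ratios realizing $\bar\alpha^{(j)}_i$ may not be available with a bounded denominator; this is absorbed by allowing an $O(\epsilon)$ slack in the per-interval average, which contributes only an additional $O(\epsilon)$ to the final bound after multiplication by $\|z\|$.
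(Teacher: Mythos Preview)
Your proposal is correct and reaches the same conclusion, but it is organized differently from the paper's proof and in one respect is sharper. The paper works directly at the level of the state trajectory: it splits the error $\|\phi(t)-\tilde\phi(t)\|$ into $E_1+E_2$, bounds $E_1$ by $L_1\int_0^t\|\phi-\tilde\phi\|$, and bounds $E_2$ by showing that on each subinterval the constructed PWM signal matches the integrated relaxed weights, so that only the variation of $\tilde\phi$ over a subinterval contributes; the exponential-stabilizing hypothesis is invoked here to get $\|\tilde\phi(t)\|\le C\|z\|$, which is where the $\|z\|$ factor enters, and Gronwall finishes. Your route instead lifts everything to the state-transition matrices $\Phi_{\alpha}(t,0)$, notes that $x(t;z,\alpha)=\Phi_{\alpha}(t,0)z$, and runs the chattering estimate at the matrix level; the $\|z\|$ factor then falls out automatically from $\|(\Phi_{\alpha^p}-\Phi_{\alpha^r})z\|\le\|\Phi_{\alpha^p}-\Phi_{\alpha^r}\|\,\|z\|$. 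This is a genuine simplification, and your observation that the exponential-stabilizing hypothesis is superfluous on a compact interval is correct: even in the paper's argument one could replace $C$ by $e^{L_1T}$ and nothing would change. What the paper's organization buys is brevity (the $E_1/E_2$ split plus Gronwall is very short), while your matrix formulation buys conceptual clarity and the slightly stronger statement that the hypothesis on $\alpha^r$ is unnecessary. One small point: the ``rational-approximation'' nuisance you flag is not actually present---since the PWM dwell times $\Delta t_{k,i}=\int_{I_k}\alpha^r_i$ can be arbitrary nonnegative reals, you can match the averages exactly, and with finitely many switches on $[0,T]$ the minimum positive dwell time furnishes the $\tau_D$ needed for $\alpha^p\in\S_p^+$.
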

\begin{proof}
Denote $\phi(t)\triangleq x(t;z,\alpha^p)$ and $\tilde\phi(t)\triangleq x(t;z,\alpha^r)$. Given relaxed control signal $\alpha^r:[0,T]\to\mathcal{U}_r, \epsilon>0$ and initial state $z\in\R^n$, the goal is to construct a pure control signal $\alpha^p:[0,T]\to\mathcal{U}_p$ where $\alpha^p\in\S_p^+$ such that $\|\phi(t)-\tilde\phi(t)\|<\epsilon \|z\|,\forall t\in [0,T]$. We first partition $[0,T]$ into equal length subintervals and then apply the following construction strategy for each subinterval. Let $h>0$ be the length of subinterval (we will decide its upper bound later). On each subinterval $[kh,(k+1)h),k\in\N$, $\alpha^p$ sequentially takes value from the set $\mathcal{U}_p$ of $M$ elements, i.e.,
\begin{align}
\alpha^p_i(t)=\left\{\begin{array}{ll} 1, & t \in [t_{k,i-1},t_{k,i}) \\ 0, & \text{otherwise} \end{array}\right., \forall i=1,\cdots,M, \label{eq:purecontrol}
\end{align}
where $t_{k,0}=kh$ and $t_{k,i}$ are defined recursively by 
\begin{align}
t_{k,i}=t_{k,i-1}+\int_{kh}^{(k+1)h} \alpha^r_i(\tau)d\tau, \forall i=1,\cdots,M. \label{eq:tki}
\end{align}
By construction, $\Delta t_{k,i}\triangleq t_{k,i}-t_{k,i-1}>0,\forall k\in\N,i\in\Q$ and thus $\alpha^p\in\S_p^+$. Similar as the proof in~\cite{BD05}, the error can be divided into two terms, i.e., $\|\phi(t)-\tilde{\phi}(t)\| \le E_1+E_2$, where 
\begin{align*}
& E_1\triangleq\Big\|\int_0^t \sum_{i=1}^M \alpha^p_i(\tau)A_i\big(\phi(\tau)-\tilde{\phi}(\tau)\big) d\tau\Big\|, \\ 
& E_2\triangleq\Big\|\int_0^t \sum_{i=1}^M \big(\alpha^p_i(\tau)-\alpha^r_i(\tau)\big)A_i \tilde{\phi}(\tau) d\tau\Big\|. 
\end{align*}
Next, we derive the upper bounds for $E_1$ and $E_2$. By matrix norm inequality and $\alpha^p\in\mathcal{U}_p$,
\begin{align*}
E_1 &\le \int_0^t \sum_{i=1}^M \|\alpha^p_i(\tau)A_i\big(\phi(\tau)-\tilde{\phi}(\tau)\big)\| d\tau \\ 
&\le \int_0^t \sum_{i=1}^M \alpha^p_i(\tau) \|A_i\| \|\phi(\tau)-\tilde{\phi}(\tau)\| d\tau \\ 
&\le L_1 \int_0^t\|\phi(\tau)-\tilde{\phi}(\tau)\|d\tau, \text{ where } L_1\triangleq \max_{i\in\Q}\|A_i\|.
\end{align*}
Due to the construction of $\alpha^p$ in~(\ref{eq:purecontrol}), we have i) $\int_{kh}^{(k+1)h} \sum_{i=1}^M \alpha^p_i(\tau)A_i \tilde{\phi}(\tau)d\tau=\sum_{i=1}^M \int_{t_{k,i-1}}^{t_{k,i}} A_i\tilde{\phi}(\tau) d\tau$. It follows from~(\ref{eq:tki}) that ii) $\int_{[t_{k,i-1},t_{k,i})}(1-\alpha^r_i(\tau))d\tau=\int_{[kh,(k+1)h)\backslash [t_{k,i-1},t_{k,i})}\alpha^r_i(\tau)d\tau$. Let $\tilde{\phi}^\Delta(t)\triangleq \tilde{\phi}(t)-\tilde{\phi}(t_k)$. Since $\alpha^r$ is exponentially stabilizing, $\exists C>0$ s.t. $\|\tilde{\phi}(t)\|\le C\|z\|,\forall t\in\R_+$ and thus iii) $\|\tilde{\phi}^\Delta(t)\|\le hL_1C\|z\|,\forall t\in[kh,(k+1)h)$. Based on i), ii) and iii),
\begin{align*}
E_2 \le& \sum_k\Big\|\int_{kh}^{(k+1)h} \sum_{i=1}^M \big(\alpha^p_i(\tau)A_i \tilde{\phi}(\tau)-\alpha^r_i(\tau)A_i \tilde{\phi}(\tau)\big) d\tau\Big\| \\
\le& \sum_k\sum_{i=1}^M \Big\| \int_{[t_{k,i-1},t_{k,i})} (1-\alpha^r_i(\tau))A_i\tilde{\phi}(\tau) d\tau - \\ &\qquad\qquad \int_{[kh,(k+1)h)\backslash [t_{k,i-1},t_{k,i})}\alpha^r_i(\tau)A_i\tilde{\phi}(\tau) d\tau\Big\|\\
\le& \sum_k\sum_{i=1}^M \int_{kh}^{(k+1)h} \|A_i\| \|\tilde{\phi}^\Delta(\tau)\|d\tau\le \frac{T}{h}Mh^2L_1^2C\|z\|. 
\end{align*}
Let $\kappa\triangleq TML_1^2C$. By choosing $h<\frac{\epsilon}{\kappa}e^{-L_1T}$, the rest of the proof follows from Gronwall inequality.
\end{proof}

\begin{rem}
The new chattering lemma differs from the original version in the following aspects: i) The error bound is any desired accuracy times the norm of initial state rather than just the desired accuracy; ii) The choice of switching signals is from the set $\S_p^+$ rather than the set $\S_m$; iii) It is under the assumption of relaxed control signal being exponentially stabilizing. In fact, the above three properties play important roles in establishing the connection to exponential discrete switching stabilizability.
\end{rem}

Lemma~\ref{lem:chattering} indicates that the set of trajectories of system~(\ref{eq:ol}) under switching signals from the set $\S_p^+$ is dense in the set of exponentially stable trajectories of the relaxed system $(\mathcal{R})$. It allows us to only focus on signals from the set $\S_p^+$ in the approximation of any exponentially stable trajectory of $(\mathcal{R})$.

\subsection{Switching Stabilizability Implies Discrete Switching Stabilizability}
The relaxed system $(\mathcal{R})$ is a homogeneous system, whose vector field is continuous with respect to both state and the control input $\alpha^r$. System $(\mathcal{R})$ is called {\em asymptotically controllable} if for each $\epsilon>0$, there exists a $\delta>0$ such that whenever $\|z\|<\delta$, there exists a control $\alpha^r:\R_+\to\mathcal{U}_r$ under which the state trajectory $x(\cdot;z,\alpha^r)$ satisfies $\|x(t;z,\alpha^r)\|<\epsilon$, for all $t\in\R_+$ and $x(t;z,\alpha^r)\to 0$ as $t\to\infty$. An important property of asymptotic controllability for homogeneous systems is stated below.
\begin{lem}[Proposition 4.4~\cite{G00}]
\label{lem:homo}
If system $(\mathcal{R})$ is asymptotically controllable, then it is exponentially feedback stabilizable in S-H sense with bounded sampling rate, i.e., there exists a feedback law $\nu:\R^n\to\mathcal{U}_r$ and constants $h_0>0,C>0,\gamma>0$ such that any closed-loop $\pi$-solution with $d(\pi)<h_0$ satisfies $\|x_\pi(t;z,\nu)\|\le Ce^{-\gamma t}\|z\|,\forall t\in\R_+,\forall z\in\R^n$.
\end{lem}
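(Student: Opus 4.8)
The plan is to deduce Lemma~\ref{lem:homo} from the theory of sample-and-hold stabilization of homogeneous control systems, following the line of argument in~\cite{G00}. The first step is to verify that system $(\mathcal{R})$ meets the structural hypotheses that make that theory applicable: its right-hand side $g(x,\alpha)\triangleq\sum_{i\in\Q}\alpha_i A_i x$ is jointly continuous in $(x,\alpha)$ and linear in $x$ (hence homogeneous of degree one: $g(ax,\alpha)=a\,g(x,\alpha)$ for $a\ge 0$); the control set $\mathcal{U}_r$ is compact and convex; and the origin is an equilibrium for every $\alpha\in\mathcal{U}_r$. The only nontrivial hypothesis is then asymptotic controllability, which is assumed.

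The second step is to upgrade asymptotic controllability into a \emph{scale-invariant finite-time contraction}: there exist $T>0$, $\lambda\in(0,1)$ and $C_0\ge 1$ such that for every $z\in\R^n$ one can choose a control $\alpha^r$ steering $z$ into the ball of radius $\lambda\|z\|$ by time $T$ while keeping $\|x(t;z,\alpha^r)\|\le C_0\|z\|$ on $[0,T]$. This is obtained by applying asymptotic controllability on the compact unit sphere — a routine compactness argument produces a single $T$, $\lambda$, $C_0$ valid for all unit initial vectors — and then rescaling the resulting controls, which is legitimate precisely because $g$ is homogeneous of degree one. Iterating this contraction already yields open-loop exponential controllability with rate and overshoot independent of $z$, and, more importantly, furnishes a building block for a homogeneous control-Lyapunov function.

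The third and central step is to convert this open-loop behaviour into a \emph{bounded-rate} feedback in the sample-and-hold sense, following the CLSS construction~\cite{CLSS97} in its homogeneous refinement. From the finite-time contraction one builds a continuous, positive definite control-Lyapunov function $V$ that is homogeneous of degree one, so that $c_1\|z\|\le V(z)\le c_2\|z\|$ and $V(az)=aV(z)$ for $a\ge 0$, and one defines $\nu(z)\in\argmin_{\alpha\in\mathcal{U}_r}\langle\zeta_z,g(z,\alpha)\rangle$ for a proximal subgradient $\zeta_z$ of $V$ at $z$, chosen constant along rays so that $\nu(az)=\nu(z)$. The proximal infinitesimal-decrease property of $V$ gives $\min_{\alpha}\langle\zeta_z,g(z,\alpha)\rangle\le -c\,V(z)$, and a first-order estimate over one sampling interval of length $h$ then yields $V(x_\pi(t_{k+1};z,\nu))\le\bigl(1-ch+O(h^2)\bigr)V(x_\pi(t_k;z,\nu))$. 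Because $V$, $g$ and $\nu$ are all homogeneous of degree one, every constant in this estimate — in particular the threshold $h_0$ below which the bracket is strictly less than one, and the second-order remainder — is independent of the magnitude of $x_\pi(t_k)$. Hence a single $h_0>0$ works globally; iterating gives $V(x_\pi(t;z,\nu))\le C'e^{-\gamma t}V(z)$ and therefore $\|x_\pi(t;z,\nu)\|\le Ce^{-\gamma t}\|z\|$ for every $\pi$ with $d(\pi)<h_0$, which is the claim.

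I expect the delicate point to be exactly this uniformity in the one-step decrease, i.e.\ showing that the admissible sampling diameter $h_0$ does not shrink as $\|x\|\to 0$; this is the feature that fails for general, non-homogeneous systems and is essentially the entire content of~\cite[Proposition~4.4]{G00}. A secondary technical nuisance is that $\nu$ is only measurable — discontinuous across the switching surfaces of the proximal-subgradient selection — so one must work exclusively with the sample-and-hold solution concept and never appeal to classical closed-loop solutions; but since $\nu$ is frozen on each interval $[t_k,t_{k+1}]$ and each frozen vector field is linear, existence of the $\pi$-solution and the interval-wise estimates are immediate.
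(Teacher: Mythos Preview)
The paper does not prove this lemma at all: it is stated as a direct citation of Proposition~4.4 in~\cite{G00}, with no proof environment following it. The only work the paper implicitly does is to ensure that system $(\mathcal{R})$ falls under the hypotheses of that proposition, which it does because $g(x,\alpha)=\sum_{i\in\Q}\alpha_i A_i x$ is jointly continuous, homogeneous of degree one in $x$, and the control set $\mathcal{U}_r$ is compact.

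Your proposal is therefore not so much a comparison target as an expansion: you have sketched the internal argument of~\cite{G00} itself. That sketch is sound --- the compactness-on-the-sphere step to extract a uniform finite-time contraction, the construction of a homogeneous-of-degree-one CLF, the ray-constant feedback $\nu(az)=\nu(z)$, and the observation that homogeneity makes the one-step decrease estimate scale-free so that a single $h_0$ works globally --- and you correctly flag that last point as the crux distinguishing this from the general CLSS result~\cite{CLSS97}. For the purposes of this paper, though, all that is needed is your first paragraph: check continuity, homogeneity, and compactness of $\mathcal{U}_r$, then invoke~\cite{G00}.
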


By Lemma~\ref{lem:homo}, asymptotic controllability of the relaxed system $(\mathcal{R})$ guarantees exponential stability of the sampled closed-loop system with sufficiently small but nonvanishing intersampling time. The existence of exponentially stable trajectories of $(\mathcal{R})$ allows us to construct exponentially stabilizing switching signals from the set $\S_p^+$ based on Lemma~\ref{lem:chattering}.

\begin{lem}
\label{lem:sspwconst}
If system~(\ref{eq:ol}) is switching stabilizable, then it is exponentially switching stabilizable under a switching signal $\sigma:\R_+\to\Q$ where $\sigma\in\S_p^+$.
\end{lem}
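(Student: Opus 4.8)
The plan is to route the whole argument through the relaxed system $(\mathcal{R})$, which (unlike the original switched system) is homogeneous and continuous in the control, apply there the strong sample-and-hold result of Lemma~\ref{lem:homo}, and then pull the resulting relaxed trajectories back down to genuine switching signals using the improved chattering Lemma~\ref{lem:chattering}, concatenating the approximations over blocks of a fixed length $T$ so as to cover all of $\R_+$ while keeping a fixed exponential rate.

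First I would observe that switching stabilizability of~(\ref{eq:ol}) forces the relaxed system $(\mathcal{R})$ to be asymptotically controllable: every pure trajectory $x(\cdot;z,\sigma)$ of~(\ref{eq:ol}) is also a relaxed trajectory under $\alpha^r(t)=e_{\sigma(t)}\in\mathcal{U}_p\subseteq\mathcal{U}_r$ (with $e_i$ the $i$th standard basis vector), so the $\epsilon$--$\delta$ property of Definition~\ref{def:ss} is precisely the asymptotic-controllability property of $(\mathcal{R})$ stated before Lemma~\ref{lem:homo}. Lemma~\ref{lem:homo} then supplies a feedback $\nu:\R^n\to\mathcal{U}_r$ and constants $h_0,C,\gamma>0$ such that every closed-loop $\pi$-solution with $d(\pi)<h_0$ satisfies $\|x_\pi(t;z,\nu)\|\le Ce^{-\gamma t}\|z\|$. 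I would fix a uniform sampling period $h\in(0,h_0)$ and choose $N\in\N$, $T\triangleq Nh$, large enough that $Ce^{-\gamma T}<1/2$.

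The core step is a block construction. For any $w\in\R^n$, running the $h$-sampled closed loop from $w$ yields on $[0,T]$ a relaxed control $\alpha^r_w$, constant on each $[jh,(j+1)h)$, whose trajectory satisfies $\|x(t;w,\alpha^r_w)\|\le C\|w\|$ for $t\in[0,T]$ and $\|x(T;w,\alpha^r_w)\|<\tfrac12\|w\|$. Applying Lemma~\ref{lem:chattering} on $[0,T]$ with a small accuracy $\epsilon$ produces a pure signal $\alpha^p_w\in\S_p^+$ consisting of at most finitely many switches with $\|x(t;w,\alpha^p_w)-x(t;w,\alpha^r_w)\|<\epsilon\|w\|$ on $[0,T]$, hence $\|x(t;w,\alpha^p_w)\|\le(C+\epsilon)\|w\|=:C_1\|w\|$ on $[0,T]$ and $\|x(T;w,\alpha^p_w)\|<\tfrac34\|w\|$. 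I would then concatenate: put $w_0=z$ and, on $[kT,(k+1)T)$, use the block signal built from $w_k\triangleq x(kT;z,\sigma_z)$, so that $w_{k+1}=x(T;w_k,\alpha^p_{w_k})$. Then $\|w_{k+1}\|<\tfrac34\|w_k\|$, whence $\|w_k\|<(3/4)^k\|z\|$ and, for $t\in[kT,(k+1)T)$, $\|x(t;z,\sigma_z)\|\le C_1\|w_k\|\le C_1(3/4)^k\|z\|\le\tfrac{4C_1}{3}e^{-\gamma't}\|z\|$ with $\gamma'=\tfrac1T\ln\tfrac43>0$. Since each block contributes only finitely many switches, $\sigma_z$ is piecewise constant, i.e. $\sigma_z\in\S_p^+$, and $C_1,\gamma'$ do not depend on $z$; this is exactly exponential switching stabilizability under a signal in $\S_p^+$.

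I expect the main obstacle to be the interaction of the chattering step with the exponential decay and with the $\S_p^+$ requirement. Lemma~\ref{lem:chattering} as stated presumes a relaxed signal that exponentially stabilizes \emph{every} initial state, whereas $\alpha^r_w$ (coming from the sampled feedback) only controls the single trajectory through $w$; the fix is to use the bound $\|x(t;w,\alpha^r_w)\|\le C\|w\|$ on $[0,T]$ that the proof of Lemma~\ref{lem:chattering} actually needs (equivalently, normalize $\nu$ to be homogeneous of degree $0$, so that $\alpha^r_w$ depends only on $w/\|w\|$). Globalization is also delicate: the exponential estimate of Lemma~\ref{lem:homo} survives the chattering approximation only on finite $[0,T]$-windows, so the geometric contraction factor $3/4$ must be re-established block by block and then summed. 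Finally, if one reads $\S_p^+$ as demanding a \emph{uniform} positive dwell time, one extra reduction is needed: quantize each $\alpha^r_w$ onto a fixed finite grid in $\mathcal{U}_r$ (harmless, since $(\mathcal{R})$ is linear in $x$ and the sample-and-hold estimate is robust to small perturbations of the control values), which makes every positive interswitch interval in~(\ref{eq:tki}) bounded below by a constant independent of $w$.
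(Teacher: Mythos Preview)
Your proposal is correct and follows essentially the same route as the paper: pass to the relaxed system $(\mathcal{R})$, invoke Lemma~\ref{lem:homo} to obtain an exponentially stabilizing relaxed (sample-and-hold) signal, approximate it on finite blocks of length $T$ via Lemma~\ref{lem:chattering}, and concatenate. The only cosmetic difference is that the paper carries the full bound $\|x(t;z,\alpha^{p,0})\|\le 2Ce^{-\gamma t}\|z\|$ on each block and iterates to get the rate $\gamma'=\tfrac{\gamma}{2}-\tfrac{\log(2C)}{T}$, whereas you use a simpler end-of-block contraction factor $3/4$; both yield the same conclusion. Your explicit flagging of the hypothesis mismatch in Lemma~\ref{lem:chattering} (the relaxed signal $\alpha^r_w$ is tied to $w$, but only the uniform bound $\|x(t;w,\alpha^r_w)\|\le C\|w\|$ is actually used in its proof) and of the uniform dwell-time issue for $\S_p^+$ are points the paper passes over silently, so if anything your write-up is more careful there.
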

\begin{proof}
Consider the pure system $(\mathcal{P})$ and the relaxed system $(\mathcal{R})$ defined before. Obviously, $(\mathcal{R})$ is asymptotically controllable given $(\mathcal{P})$ is switching stabilizable. Furthermore, $(\mathcal{R})$ is exponentially feedback stabilizable in S-H sense with bounded sampling rate by Lemma~\ref{lem:homo}. Let $\nu:\R^n\to\mathcal{U}_r$ be the stabilizing feedback law of $(\mathcal{R})$. The goal is to find an exponentially stabilizing signal $\sigma\in\S_p^+$ of $(\mathcal{P})$. 

We now fix a nonvanishing sampling schedule $\pi=\{t_k\}_{k\in\N}$ and consider a relaxed control signal defined as $\alpha^r(t)=\nu(x_{\pi}(t_k;z,\nu)), \forall t\in [t_k,t_{k+1}),\forall k\in\N$. As $\alpha^r$ is exponentially stabilizing, there exists $C>0,\gamma>0$ such that $\|x(t;z,\alpha^r)\|\le Ce^{-\gamma t}\|z\|,\forall z\in\R^n,\forall t\in\R_+$. Let the finite horizon $T>\frac{2\log(2C)}{\gamma}$ and $\epsilon=Ce^{-\gamma T}$. By Lemma~\ref{lem:chattering}, we can construct a pure control signal $\alpha^{p,0}:[0,T]\to\mathcal{U}_p$ where $\alpha^{p,0}\in\S_p^+$ such that $\|x(t;z,\alpha^{p,0})-x(t;z,\alpha^r)\|<\epsilon\|z\|,\forall t\in[0,T],\forall z\in\R^n$. Then, the state trajectory of $(\mathcal{P})$ under $\alpha^{p,0}$ satisfies $\|x(t;z,\alpha^{p,0})\|\le \|x(t;z,\alpha^r)\|+\|x(t;z,\alpha^{p,0})-x(t;z,\alpha^r)\| \le Ce^{-\gamma t}\|z\|+\epsilon\|z\| \le 2Ce^{-\gamma t}\|z\|,\forall t\in[0,T]$. We next iteratively apply the bound on intervals of length $T$ to obtain the exponential convergence on $\R_+$. Let $\alpha^p:\R_+\to\mathcal{U}_p$ be the concatenation of $\alpha^{p,k}:[kT,(k+1)T)\to\mathcal{U}_p, k\in\N$. For $t\in[kT,(k+1)T)$, $\|x(t;z,\alpha^p)\|\le (2C)^{k+1}e^{-\gamma kT}\|z\|<e^{-(\frac{k}{k+1}\gamma-\frac{\log(2C)}{T})t}\|z\|$. In general, for any $t\in\R_+$, $\|x(t;z,\alpha^p)\|<e^{-\gamma' t}\|z\|$ where $\gamma'=\frac{\gamma}{2}-\frac{\log(2C)}{T}\in(0,\gamma)$. The stabilizing signal $\sigma:\R_+\to\Q$ can be obtained from $\alpha^p$ as follows: $\sigma(t)=i$, if $\alpha^p_i(t)=1,\forall t\in\R_+$. As $\alpha^{p,k}\in\S_p^+,\forall k\in\N$, we have $\alpha^p\in\S_p^+$ and thus $\sigma\in\S_p^+$.
\end{proof}

Now we have found a switching signal $\sigma\in\S_p^+$ that exponentially stabilizes the system. However, the stabilizing switching signal $\sigma$ may not have a uniform intersampling time. It remains to show that if we sample the signal with a fixed intersampling time that is sufficiently small and hold the signal until the next sampling, the corresponding state trajectory is also exponentially stable. This will then imply discrete switching stabilizability.
\begin{thm}
\label{thm:sstodss}
If system~(\ref{eq:ol}) is switching stabilizable, then it is exponentially discrete switching stabilizable.
\end{thm}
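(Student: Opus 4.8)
The plan is to take the exponentially stabilizing signal $\sigma\in\S_p^+$ furnished by Lemma~\ref{lem:sspwconst}, sample it at a fixed small rate, hold it between samples, and show that the resulting grid-aligned signal is still exponentially stabilizing, with a \emph{uniform} exponential bound. The decisive observation is that, although the dwell time of this $\sigma$ may be arbitrarily small and $z$-dependent, its \emph{switching rate} is bounded by a constant determined only by the data $\{A_i\}$: the construction in the proof of Lemma~\ref{lem:sspwconst} applies the chattering construction of Lemma~\ref{lem:chattering} on successive windows of a fixed length, cutting any interval of length $\ell$ into $O(\ell)$ equal subintervals with at most $M$ switches in each, and the subinterval length used there is pinned by the quantity $\kappa=TML_1^2C$ and the target accuracy --- never by the initial state (here $L_1=\max_i\|A_i\|$, as in the proof of Lemma~\ref{lem:chattering}). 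Hence there is a constant $c_0$, independent of $z$, such that $\sigma$ has at most $c_0\ell$ switches on any interval of length $\ell$; and Lemma~\ref{lem:sspwconst} also supplies $C,\gamma'>0$ with $\|x(t;z,\sigma)\|\le Ce^{-\gamma' t}\|z\|$ for all $t,z$.

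Fix a block length $T_*$ with $Ce^{-\gamma' T_*}\le\frac14$, and for a target grid width $h$ (below a threshold to be chosen) put $T:=\lceil T_*/h\rceil h$, a multiple of $h$ lying in a fixed interval $[T_*,T_{\max}]$. The sampled-and-held signal $\sigma_h(t):=\sigma(\lfloor t/h\rfloor h)$ differs from $\sigma$ only on the length-$h$ cells that contain a switch of $\sigma$, a set of measure at most $c_0Th$ on $[0,T]$. Writing the difference of the two trajectories as $\int_0^t A_{\sigma_h(\tau)}(x(\tau;z,\sigma_h)-x(\tau;z,\sigma))\,d\tau+\int_{\{\sigma_h\ne\sigma\}}(A_{\sigma_h(\tau)}-A_{\sigma(\tau)})\,x(\tau;z,\sigma)\,d\tau$ and using $\|x(\cdot;z,\sigma)\|\le C\|z\|$, the Gronwall inequality gives $\|x(t;z,\sigma_h)-x(t;z,\sigma)\|\le\eta(h)\|z\|$ on $[0,T]$ with $\eta(h)=2CL_1c_0Th\,e^{L_1T}=O(h)\to0$ and constant independent of $z$. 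Choosing the threshold $h_0$ so that $\eta(h)<\frac14$ for $h<h_0$, we obtain, for every $z$ and every $h<h_0$, a one-block contraction $\|x(T;z,\sigma_h)\|<\frac12\|z\|$ together with a uniform transient bound $\|x(t;z,\sigma_h)\|\le(C+\tfrac14)\|z\|$ on $[0,T]$.

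It remains to iterate, using the linearity (hence degree-one homogeneity) of~(\ref{eq:ol}). Define $\sigma$ on $\R_+$ block by block: on $[mT,(m+1)T)$ use the rate-$h$ sampled version of the Lemma~\ref{lem:sspwconst} signal associated with the current state $x(mT;z,\sigma)$; by homogeneity the one-block estimate rescales to this block with the same constants. Since $T$ is a multiple of $h$, the resulting $\sigma$ is constant on every $[kh,(k+1)h)$, i.e. it has exactly the form required by Definition~\ref{def:dss}. Chaining the estimate gives $\|x(mT;z,\sigma)\|<2^{-m}\|z\|$ and $\|x(t;z,\sigma)\|\le(C+\tfrac14)2^{-m}\|z\|$ for $t\in[mT,(m+1)T)$, hence $\|x(t;z,\sigma)\|\le 2(C+\tfrac14)\,e^{-\gamma'' t}\|z\|$ for all $t\ge0$, with $\gamma'':=\tfrac{\ln 2}{T_{\max}}>0$, both constants independent of $h$ and of $z$. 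This is exactly exponential discrete switching stabilizability (the $\forall\epsilon\,\exists\delta$ clause follows with $\delta=\epsilon/(2C+\tfrac12)$).

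The main obstacle --- and the only genuinely delicate point --- lives in the first paragraph: one must recognize that the sample-and-hold error is controlled by the switching \emph{rate} of the Lemma~\ref{lem:sspwconst} signal rather than by its dwell time, and that this rate admits a bound uniform in $z$ by virtue of the explicit chattering construction. It is precisely this uniformity that lets a single threshold $h_0$ serve for all initial states, so that genuine discrete (as opposed to merely sampled-and-held) stabilizability follows; the remaining ingredients --- the Gronwall estimate and the homogeneity iteration --- are routine, the only bookkeeping being to keep $T$ a multiple of $h$ so that the concatenated signal lives on the uniform $h$-grid.
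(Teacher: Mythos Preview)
Your proof is correct and follows the same route as the paper's: take the exponentially stabilizing $\sigma\in\S_p^+$ from Lemma~\ref{lem:sspwconst}, sample-and-hold at step $h$, bound the deviation on a finite window by the two-term splitting plus Gronwall (the second term being $O(h)\|z\|$ because the number of switches of $\sigma$ on $[0,T]$ is finite), and then iterate the one-block contraction to $\R_+$. Your write-up is arguably more careful than the paper's on two minor points---you trace the \emph{uniformity in $z$} of the switching-rate bound back to the chattering grid width in Lemma~\ref{lem:chattering} (the paper simply invokes $\sigma_0\in\S_p^+$, which a priori only yields a $z$-dependent dwell time), and you force $T$ to be a multiple of $h$ so that the concatenated signal stays on the uniform grid---but these are refinements of the same argument, not a different approach.
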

\begin{proof}
By Lemma~\ref{lem:sspwconst}, there exists a switching signal $\sigma_0:\R_+\to\Q$ where $\sigma_0\in\S_p^+$ under which the state trajectory $x(\cdot;z,\sigma_0)$ is exponentially stable, i.e., $\exists C_0>0,\gamma>0$, s.t. $\|x(t;z,\sigma_0)\|\le C_0e^{-\gamma t}\|z\|,\forall t\in\R_+,\forall z\in\R^n$. Let $\sigma_h:\R_+\to\Q$ be the sampled signal of $\sigma_0$ with sampling intervals of uniform length $h$, i.e., $\sigma_h(t)\triangleq \sigma_0(kh),\forall t\in [kh,(k+1)h),\forall k\in\N$. Let $\phi_0(t)\triangleq x(t;z,\sigma_0), \phi_h(t)\triangleq x(t;z,\sigma_h)$. The rest of the proof has two ingredients: i) the exponential convergence of the error between $\phi_0$ and $\phi_h$ on a finite horizon and ii) the extension of the exponential convergence of $\phi_h$ from a finite horizon to $\R_+$.

To show i), one can follow the proof of Lemma~\ref{lem:chattering} by dividing the error into two terms and bounding the first term with integral of the error and the second term with constant times $h\|z\|$. Let $L_1\triangleq \max_{i\in\Q}\|A_i\|,L_2\triangleq \max_{i,j\in\Q}\|A_i-A_j\|$. For the first term, $E_1\triangleq \|\int_0^t A_{\sigma_h(\tau)}\big(\phi_h(\tau)-\phi_0(\tau)\big)d\tau\|\le L_1\int_0^t \|\phi_h(\tau)-\phi_0(\tau)\|d\tau$. We now discuss the second term. Since $\sigma_0\in\S_p^+$, there are at most $N<\infty$ switches on a finite interval and thus i.1) $\sigma_h$ and $\sigma_0$ differ on intervals of length at most $Nh$. As $\sigma_0$ is exponentially stabilizing, i.2) $\|\phi_0(t)\| \le C_0\|z\|,\forall t\in \R_+$. Based on i.1) and i.2), $E_2\triangleq \int_0^t \|\big(A_{\sigma_h(\tau)}-A_{\sigma_0(\tau)}\big)\phi_0(\tau)\|d\tau\le L_2 N h C_0\|z\|\triangleq \kappa h\|z\|$. Let $h_0\triangleq \frac{C_0}{\kappa}e^{-(L_1+\gamma)T}$. By choosing $h\in(0,h_0)$, the rest of i) follows from Gronwall inequality.

To show ii), one can follow the proof of Lemma~\ref{lem:sspwconst} by iteratively applying the bound on intervals of length $T$. By i), for sufficiently small $h$, $\|\phi_h(t)\|\le 2C_0e^{-\gamma t}\|z\|,\forall t\in [0,T],z\in\R^n$. By choosing $T>\frac{2\log(2C_0)}{\gamma}$, $\|\phi_h(t)\|<e^{-\gamma' t}\|z\|,\forall t\in\R_+,\forall z\in\R^n$ where $\gamma'=\frac{\gamma}{2}-\frac{\log(2C_0)}{T}\in(0,\gamma)$.
\end{proof}

The above theorem indicates that switching stabilizability implies exponential switching stabilizability of discrete-time systems obtained by sampling the original system with sufficiently small and fixed intersampling time. Although such a result appears to be natural, its proof is highly nontrivial due to the possibility of wild behaviors of a measurable stabilizing switching signal $\sigma\in\S_m$ and the discontinuity of the switched vector field with respect to the switching input $\sigma$. In fact, the result does not hold for general switched nonlinear systems, for which the existence of a stabilizing switching signal $\sigma\in\S_m$ does not imply the existence of a $\sigma\in\S_p^+$ with switching intervals of uniform length.

\section{Converse Control-Lyapunov Function Theorem for Switched Linear Systems}
In this section, we will develop a converse CLF theorem for the switching stabilizability in Definition~\ref{def:ss} where the switching control $\sigma$ is only required to be measurable. This is more general than the definition used in many other works~\cite{liberzon2003switching,LA05} for SLSs that require $\sigma$ to be piecewise constant. 

\subsection{Control-Lyapunov Function}
Control-Lyapunov function (CLF) is an important tool to study stabilization problems. This paper focuses on an important class of nonsmooth CLFs, namely, pointwise minimum piecewise quadratic CLFs. 
\begin{defn}[Pointwise Minimum Piecewise Quadratic Function (pm-PQF)]
\label{def:ptmin}
Let $P_j,j\in\N_m$ be symmetric matrices, i.e., $P_j^T=P_j,\forall j\in\N_m$. The function defined by 
\begin{align}
V(x)\triangleq \min_{j\in\N_m}x^TP_jx, \quad x\in\R^n,
\end{align}
is called a pm-PQF if $\Omega_j\neq\emptyset$, $\forall j\in\N_m$, where $\Omega_j\triangleq \{x\in\R^n: x^TP_jx<x^TP_kx, \forall k\neq j\}$.
\end{defn}
The above definition ensures that every quadratic function $V_j(x)\triangleq x^TP_jx$ contributes nontrivially to the pointwise minimum. Due to the smoothness of $V_j$, each $\Omega_j$ is a full-dimensional open set in $\R^n$ with continuously differentiable boundary $\partial\Omega_j$ of measure zero. A pm-PQF is clearly a piecewise smooth function, for which directional derivative exists everywhere~\cite[p.43]{subbotin1995generalized}. 

\begin{lem}[\cite{subbotin1995generalized}]
\label{lem:dd}
For any pm-PQF $g:\mathbb{R}^n \to \mathbb{R}$, the limit $Dg(x;\eta) \triangleq \lim_{\delta \downarrow 0} \frac{1}{\delta}(g(x+\delta \eta)-g(x))$ exists, $\forall x, \eta \in \mathbb{R}^n$. 
\end{lem}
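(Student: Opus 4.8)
The plan is to compute the one-sided directional derivative explicitly by localizing the pointwise minimum to the \emph{active} quadratic pieces at $x$. First I would fix $x,\eta\in\R^n$ and introduce the active index set $I(x)\triangleq\{j\in\N_m: x^TP_jx=g(x)\}$, which is nonempty because the minimum of finitely many reals is attained. For every inactive index $j\notin I(x)$ there is a strict gap $\epsilon_j\triangleq x^TP_jx-g(x)>0$.

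Second, using continuity of the finitely many quadratics $V_j(y)=y^TP_jy$ and of $g$, I would show that inactive indices stay strictly above the minimum for small displacements: there exists $\bar\delta>0$ such that for all $\delta\in(0,\bar\delta)$ and all $j\notin I(x)$ one has $V_j(x+\delta\eta)>\min_{k\in I(x)}V_k(x+\delta\eta)$. Indeed $V_j(x+\delta\eta)\to g(x)+\epsilon_j$ while $\min_{k\in I(x)}V_k(x+\delta\eta)\to g(x)$ as $\delta\downarrow 0$, so each $j\notin I(x)$ yields a positive threshold on $\delta$, and $\bar\delta$ is the minimum of these finitely many thresholds. Consequently $g(x+\delta\eta)=\min_{j\in I(x)}V_j(x+\delta\eta)$ for all $\delta\in(0,\bar\delta)$.

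Third, I would invoke the exact expansion $V_j(x+\delta\eta)=x^TP_jx+2\delta\,x^TP_j\eta+\delta^2\eta^TP_j\eta$. For $j\in I(x)$ the constant term equals $g(x)$, so for $\delta\in(0,\bar\delta)$ the difference quotient reduces to $\tfrac{1}{\delta}\big(g(x+\delta\eta)-g(x)\big)=\min_{j\in I(x)}\big(2\,x^TP_j\eta+\delta\,\eta^TP_j\eta\big)$, which is the minimum of finitely many affine, hence continuous, functions of $\delta$. Letting $\delta\downarrow 0$ gives $Dg(x;\eta)=\min_{j\in I(x)}2\,x^TP_j\eta$, a finite real number, proving existence of the limit.

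The algebra is routine; the step I expect to be the crux is the second one, namely verifying that near $x$ the pointwise minimum is determined solely by the active pieces. This relies essentially on there being finitely many quadratics (so the thresholds on $\delta$ can be minimized) together with the strict separation $\epsilon_j>0$ for inactive indices. It is precisely this localization that transfers the directional differentiability of the smooth pieces to the nonsmooth envelope $g$.
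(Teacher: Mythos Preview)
Your argument is correct. Note, however, that the paper does not supply its own proof of this lemma: it is quoted directly from the cited reference~\cite{subbotin1995generalized}, so there is no in-paper proof to compare against. Your direct computation via the active index set $I(x)$ is the standard Danskin-type argument for the one-sided directional derivative of a finite pointwise minimum of smooth functions, and each step (localization to active indices by continuity and strict separation, exact quadratic expansion, then passing to the limit through a finite minimum of affine functions of $\delta$) is sound.

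One remark worth keeping: the explicit formula you obtain, $Dg(x;\eta)=\min_{j\in I(x)}2\,x^TP_j\eta$, is exactly what the paper relies on later in the proof of Lemma~\ref{lem:dssclf}, where it writes $DV_N^h(z;A_iz)=z^T(A_i^TP+PA_i)z$ with $P=\argmin_{P\in\mathcal{H}_N}z^TPz$. So your derivation not only establishes existence of the limit but also furnishes the closed form that the subsequent analysis needs.
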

Due to Lemma~\ref{lem:dd}, directional derivative is well defined for pm-PQFs. Therefore, we can define CLFs based on pm-PQFs where conditions are given in terms of directional derivative.
\begin{defn}[Pointwise Minimum Piecewise Quadratic Control-Lyapunov Function (pm-PQCLF)] 
\label{def:ptminclf}
A pm-PQF $V:\R^n\to\R_+$ is called a pm-PQCLF if there exists a continuous function $W:\R^n\to\R_+$ such that the following conditions hold:
\begin{align}
& V(x)>0, W(x)>0, \forall x \neq 0, \quad V(0)=0; \label{cond:pd} \\
& \mathcal{L}_{\beta}=\{x:V(x) \le \beta\} \text{ is bounded for each } \beta; \label{cond:prop} \\
& \min_{i \in \mathcal{Q}} DV(x;f_i(x)) \le -W(x), \quad \forall x \in \mathbb{R}^n.  \label{cond:inf} 
\end{align}
\end{defn}
We refer to (\ref{cond:pd}) as the positive definite condition, refer to (\ref{cond:prop}) as the radially unbounded condition, and refer to (\ref{cond:inf}) as the decreasing condition. For a discrete-time SLS, it has been shown that switching stabilizability implies the existence of a pm-PQCLF~\cite{ZAHV09}. We will prove a similar converse pm-PQCLF theorem for continuous-time switching stabilizability. The proof relies on the connection between continuous-time and discrete-time switching stabilizability established in the previous section. Consider the discrete-time switched linear system (DTSLS) obtained by sampling system~(\ref{eq:ol}) with intervals of length $h$:
\begin{align}
x(k+1)=e^{A_{\sigma(k)}h}x(k), \quad \sigma(k)\in\Q,k\in\N. \label{eq:dt}
\end{align}
Denote $x^h(\cdot;z,\sigma):\N\to\R^n$ as the solution of DTSLS~(\ref{eq:dt}) under a switching sequence $\sigma:\N\to\Q$ with initial state $z\in\R^n$. As shown in~\cite{ZHA12}, pm-PQCLFs for DTSLSs can be constructed from finite-horizon value function defined below.
\begin{defn}[Value Function] 
Denoted by $J^h_N(z,\sigma)\triangleq\sum_{k=0}^N\|x^h(k;z,\sigma)\|^2$ the $N$-horizon cost function of system~(\ref{eq:dt}) with initial state $z$ under switching sequence $\sigma=\{\sigma_k\}_{k=0}^N$. The $N$-horizon value function of system~(\ref{eq:dt}) is defined as $V_N^h(z)=\min_{\sigma}J^h_N(z,\sigma)$.
\end{defn}

It can be easily shown that the value function defined above is a pm-PQF.
\begin{lem}[\cite{ZHA12}]
The N-horizon value function of system~(\ref{eq:dt}) takes the form of $V^h_N(z)=\min_{P\in\mathcal{H}_N}z^TPz$ where $\mathcal{H}_N$ is a finite set of positive definite matrices.
\end{lem}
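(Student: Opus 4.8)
The plan is to exploit the linearity of the sampled system~(\ref{eq:dt}): once the switching sequence is frozen, the $N$-horizon cost becomes a single quadratic form in the initial state, so the value function $V_N^h$ is just the pointwise minimum of a finite family of quadratic forms, one per switching sequence. No chattering- or Lyapunov-type machinery should be needed here — this lemma is essentially a bookkeeping step that packages the dynamic program explicitly.

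Concretely, I would first fix a switching sequence $\sigma$ and write the sampled trajectory out. By~(\ref{eq:dt}), $x^h(k;z,\sigma)=e^{A_{\sigma_{k-1}}h}\cdots e^{A_{\sigma_0}h}z=:\Phi_k(\sigma)z$, with the convention $\Phi_0(\sigma)\triangleq I$. Hence
\begin{align*}
J^h_N(z,\sigma)=\sum_{k=0}^N\|\Phi_k(\sigma)z\|^2=z^T\Big(\sum_{k=0}^N\Phi_k(\sigma)^T\Phi_k(\sigma)\Big)z\triangleq z^TP_\sigma z.
\end{align*}
Each $P_\sigma$ is symmetric, and since the $k=0$ summand equals the identity we get $z^TP_\sigma z\ge\|z\|^2$, so $P_\sigma$ is positive definite. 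Next, $J_N^h(\cdot,\sigma)$ depends only on $(\sigma_0,\dots,\sigma_{N-1})$, and there are at most $M^N$ such tuples, so $\mathcal{H}_N\triangleq\{P_\sigma\}$ is a finite set of positive definite matrices. Taking the minimum over this finite index set,
\begin{align*}
V_N^h(z)=\min_\sigma J^h_N(z,\sigma)=\min_{P\in\mathcal{H}_N}z^TPz,
\end{align*}
with the minimum attained for every $z\in\R^n$, which is the asserted form.

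If, in addition, one wants $\mathcal{H}_N$ to realize $V_N^h$ as a genuine pm-PQF in the sense of Definition~\ref{def:ptmin} (every matrix contributing a nonempty region $\Omega_j$), I would append a pruning step: repeatedly discard any $P\in\mathcal{H}_N$ with $z^TPz\ge\min_{P'\in\mathcal{H}_N,\,P'\ne P}z^TP'z$ for all $z$. This terminates because $\mathcal{H}_N$ is finite, it leaves $V_N^h$ unchanged, and each surviving matrix then contributes a nonempty open region (open because cut out by finitely many strict quadratic inequalities, nonempty because otherwise the matrix would have been removed). Overall the argument is elementary; to the extent there is any subtlety it lies in this finite pruning step and in insisting on strict positive definiteness of $P_\sigma$ — the latter being immediate from the $k=0$ term — so I do not expect a genuine obstacle, and the substantive work of the section lies rather in converting such value functions into CLFs.
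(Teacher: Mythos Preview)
The paper does not prove this lemma itself; it is quoted from~\cite{ZHA12} without argument, so there is nothing to compare against directly. Your proof is correct and is the standard one: for each fixed finite switching sequence the cost is a single quadratic form $z^TP_\sigma z$ with $P_\sigma\succeq I$ (from the $k=0$ summand), there are at most $M^N$ such sequences, and minimizing over this finite family gives the claimed representation; the optional pruning step you append to meet Definition~\ref{def:ptmin} is a sensible refinement but is not required by the lemma as stated.
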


The converse result for switching stabilizability of DTSLSs is developed in terms of finite-horizon value functions. It suggests that the finite-horizon value function $V_N^h$ will eventually become a pm-PQCLF as the horizon $N$ increases.
\begin{thm}[\cite{ZAHV09}]
\label{thm:dtconverse}
If system~(\ref{eq:dt}) is exponentially switching stabilizable, there exists constants $N_0<\infty, \kappa>0$ such that for any $N\ge N_0$, the N-horizon value function $V^h_N$ satisfies
\begin{align}
\min_{i\in\Q}\{V_N^h(e^{A_ih}z)-V_N^h(z)\} \le -\kappa\|z\|,\quad \forall z\in\R^n. \label{cond:dtinf}
\end{align}
\end{thm}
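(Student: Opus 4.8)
The plan is to work throughout with the dynamic‑programming structure of the value function rather than with its matrix‑set description. Write $\bar A_i\triangleq e^{A_ih}$ and $L\triangleq\max_{i\in\Q}\|\bar A_i\|$. Exponential switching stabilizability of~(\ref{eq:dt}) means there are $C\ge1$, $\lambda\in(0,1)$ such that from every $z$ some switching sequence yields $\|x^h(k;z,\sigma)\|\le C\lambda^k\|z\|$ for all $k$. First I would record the elementary facts about $V^h_N$: it is $2$-homogeneous and nonnegative; truncating the geometric bound to horizon $N$ and summing $\sum_k C^2\lambda^{2k}$ gives the sandwich $\|z\|^2\le V^h_N(z)\le C_\infty\|z\|^2$ for all $N$, where $C_\infty\triangleq C^2/(1-\lambda^2)$ and the lower bound is the $k=0$ term; it is nondecreasing in $N$; and, since $\Q$ is finite so that minimizing sequences are attained and the principle of optimality holds, it satisfies the Bellman recursion $V^h_N(z)=\|z\|^2+\min_{i\in\Q}V^h_{N-1}(\bar A_i z)$ for $N\ge1$. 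By $2$-homogeneity it suffices to prove the decrease bound in the form with right‑hand side $-\kappa\|z\|^2$.

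Next I would reduce the claim to the decay of the increments $\Delta_N\triangleq V^h_N-V^h_{N-1}\ge0$. From the Bellman recursion, $V^h_N(z)-\|z\|^2=\min_i V^h_{N-1}(\bar A_i z)$, hence
\begin{align*}
\min_{i\in\Q}\bigl\{V^h_N(\bar A_i z)-V^h_N(z)\bigr\}
&=\min_{i}V^h_N(\bar A_i z)-\min_{i}V^h_{N-1}(\bar A_i z)-\|z\|^2\\
&\le\max_{i\in\Q}\Delta_N(\bar A_i z)-\|z\|^2,
\end{align*}
using $\min_i f(i)-\min_i g(i)\le\max_i(f(i)-g(i))$. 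So the theorem follows once I show $\Delta_N(w)\le\varepsilon_N\|w\|^2$ with $\varepsilon_N\to0$, at which point $L^2\varepsilon_N<1$ for large $N$ and $\kappa=\tfrac12$ will do.

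For the decay of $\Delta_N$ I would combine two estimates. (i) \emph{Telescoping along optimal modes:} comparing the $N$- and $(N-1)$-horizon Bellman recursions at $w$, if $j$ is an optimal first mode of the $(N-1)$-horizon problem at $w$ then $V^h_N(w)\le\|w\|^2+V^h_{N-1}(\bar A_j w)$ while $V^h_{N-1}(w)=\|w\|^2+V^h_{N-2}(\bar A_j w)$, so $\Delta_N(w)\le\Delta_{N-1}(\bar A_j w)$; by the principle of optimality $\bar A_j w$ starts an optimal $(N-1)$-horizon trajectory whose tail is again optimal, so iterating gives $\Delta_N(w)\le\Delta_1(y_{N-1})=\min_i\|\bar A_i y_{N-1}\|^2\le L^2\|y_{N-1}\|^2$, where $y_0=w,\dots,y_{N-1}$ is an optimal $(N-1)$-horizon trajectory from $w$. (ii) \emph{Geometric contraction of the optimal trajectory:} with $a_k\triangleq V^h_{N-1-k}(y_k)$, the Bellman recursion gives $a_k=\|y_k\|^2+a_{k+1}$ and the sandwich gives $\|y_k\|^2\ge a_k/C_\infty$, so $a_{k+1}\le\theta a_k$ with $\theta\triangleq1-1/C_\infty\in(0,1)$, whence $\|y_{N-1}\|^2=a_{N-1}\le\theta^{N-1}V^h_{N-1}(w)\le C_\infty\theta^{N-1}\|w\|^2$. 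Combining, $\Delta_N(w)\le L^2C_\infty\theta^{N-1}\|w\|^2$; plugging back yields $\min_i\{V^h_N(\bar A_i z)-V^h_N(z)\}\le(L^4C_\infty\theta^{N-1}-1)\|z\|^2$, and choosing $N_0$ with $L^4C_\infty\theta^{N_0-1}\le\tfrac12$ gives~(\ref{cond:dtinf}) for all $N\ge N_0$.

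The step I expect to be the main obstacle is~(ii): turning the crude two‑sided bound $\|z\|^2\le V^h_N(z)\le C_\infty\|z\|^2$ into a genuine contraction \emph{along optimal trajectories}, with a rate and an $N$-uniformity that persist up to the terminal state. The resolution is that exponential stabilizability should be used not as ``each trajectory can be made to decay'' but through the resulting uniform cost constant $C_\infty$, together with the fact that the principle of optimality forces every tail of an optimal finite‑horizon trajectory to itself be finite‑horizon optimal — exactly what licenses iterating $a_{k+1}\le\theta a_k$. Estimate~(i) needs some care tracking which mode is optimal for which horizon, but is essentially combinatorial, and homogeneity, the $\min$/$\max$ comparison and attainment of optima over finitely many sequences are routine.
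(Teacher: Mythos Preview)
The paper does not prove this theorem: it is quoted as a known result from~\cite{ZAHV09} and used as a black box in the proof of Lemma~\ref{lem:dssclf}. So there is no ``paper's own proof'' to compare against. That said, your argument is correct and self-contained. The Bellman recursion, the uniform sandwich $\|z\|^2\le V^h_N(z)\le C_\infty\|z\|^2$, the reduction to controlling the increment $\Delta_N$, the telescoping inequality $\Delta_N(w)\le\Delta_{N-1}(\bar A_j w)$ along an optimal $(N-1)$-horizon trajectory, and the geometric decay $a_{k+1}\le(1-1/C_\infty)a_k$ all check out. In particular, the point you flagged as the main obstacle --- that tails of optimal finite-horizon trajectories are themselves optimal for the shorter horizon, so the contraction can be iterated --- is exactly right and is what makes the argument work.

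Two remarks. First, you are right to recast the conclusion with $-\kappa\|z\|^2$ on the right-hand side: the left-hand side of~(\ref{cond:dtinf}) is $2$-homogeneous in $z$, so the inequality as printed (with $-\kappa\|z\|$) cannot hold for all $z$ near the origin unless $\kappa=0$; this is a typo in the statement, and indeed the paper uses the quadratic version downstream in Lemma~\ref{lem:dssclf}. Second, your approach is essentially the standard one for this kind of converse result in the discrete-time setting (value-function monotonicity plus dynamic programming), which is presumably what~\cite{ZAHV09} does as well, though possibly organized differently.
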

Note that condition~(\ref{cond:dtinf}) can be considered as a discrete-time version of the decreasing condition~(\ref{cond:inf}). As we will see next, condition~(\ref{cond:dtinf}) in discrete-time is the key to prove condition~(\ref{cond:inf}) in continuous-time by proper choice of $h$ and $N$.

\subsection{Converse pm-PQCLF Theorem}
We now develop a converse CLF result for the most general switching stabilizability (Definition~\ref{def:ss}). According to Theorem~\ref{thm:sstodss}, switching stabilizability implies exponential switching stabilizability of a collection of DTSLSs~(\ref{eq:dt}) with sufficiently small $h$. Then, Theorem~\ref{thm:dtconverse} ensures that the finite-horizon value function $V^h_N$ is a pm-PQCLF for DTSLS~(\ref{eq:dt}). We want to show that $V_N^h$ is also a pm-PQCLF for system~(\ref{eq:ol}). The proof uses the property of $V_N^h$ being a pm-PQF to write the difference term in condition~(\ref{cond:dtinf}) as the sum of the directional derivative term in condition~(\ref{cond:inf}) and an error term that can always be made positive by choosing small enough $h$. The dependency of $V^h_N$ on $h$ imposes some challenges for establishing the desired result.
\begin{lem} 
\label{lem:dssclf}
If system~(\ref{eq:ol}) is exponentially discrete switching stabilizable, then it admits the finite-horizon value function $V^h_N$ with sufficiently small $h$ and sufficiently large $N$ as pm-PQCLFs.
\end{lem}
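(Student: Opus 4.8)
The plan is to show that for $h$ small and $N$ large the finite-horizon value function $V\triangleq V^h_N$ satisfies the three defining properties of a pm-PQCLF (Definition~\ref{def:ptminclf}). By the cited characterization $V^h_N(z)=\min_{P\in\mathcal{H}_N}z^TPz$ with $\mathcal{H}_N$ a finite set of positive definite matrices, after discarding any matrices whose region is empty we may assume $V^h_N$ is a pm-PQF in the sense of Definition~\ref{def:ptmin}. The positive-definiteness condition~(\ref{cond:pd}) and the radial unboundedness condition~(\ref{cond:prop}) then follow at once, since $V^h_N(z)\ge(\min_{P\in\mathcal{H}_N}\lambda_{\min}(P))\,\|z\|^2$ and $V^h_N$ is a pointwise minimum of finitely many radially unbounded quadratics; the auxiliary function $W$ will be taken of the form $W(z)=c\|z\|^2$, with $c>0$ fixed only at the last step. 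The whole difficulty therefore lies in the decreasing condition~(\ref{cond:inf}).

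To obtain~(\ref{cond:inf}) I would first pass through the discrete-time inequality. Exponential discrete switching stabilizability makes each sampled system~(\ref{eq:dt}), $h\in(0,h_0)$, exponentially switching stabilizable (with a rate uniform in $h$), so Theorem~\ref{thm:dtconverse} supplies $N_0(h)$ and $\kappa(h)>0$ such that for every $N\ge N_0(h)$ and every $z$ some index $i^*=i^*(z)$ satisfies $V^h_N(e^{A_{i^*}h}z)-V^h_N(z)\le-\kappa(h)\|z\|^2$ (one in fact expects $\kappa$ close to $1$ for large $N$, from the dynamic-programming identity $V^h_\infty(z)=\|z\|^2+\min_i V^h_\infty(e^{A_ih}z)$). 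Now fix $z\neq0$ and this $i^*$ and study $\psi(t)\triangleq V^h_N(e^{A_{i^*}t}z)$ on $[0,h]$. Since $V^h_N$ is a pointwise minimum of finitely many quadratics and $t\mapsto e^{A_{i^*}t}z$ is analytic, $\psi$ is Lipschitz and piecewise $C^2$, with $\psi(h)-\psi(0)=\int_0^h\dot\psi(t)\,dt$ and $\dot\psi(0^+)=DV^h_N(z;A_{i^*}z)$ (Lemma~\ref{lem:dd}). The key structural observation is that whenever the curve crosses from one quadratic's region into another's, the active-piece derivative $\dot\psi$ can only jump \emph{downward} (the quadratic that becomes active is the one that becomes smaller), while on the smooth arcs $|\ddot\psi|$ is bounded by $C_N\|z\|^2$ with $C_N$ governed by $\max_i\|A_i\|$ and $\max_{P\in\mathcal{H}_N}\|P\|$. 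Hence $\dot\psi(t)\ge DV^h_N(z;A_{i^*}z)-C_N\|z\|^2h-J_h(z)$ on $[0,h]$, where $J_h(z)\ge0$ is the total downward jump of $\dot\psi$; integrating and combining with the discrete inequality gives
\begin{align*}
h\,DV^h_N(z;A_{i^*}z)\le-\kappa(h)\|z\|^2+C_N\|z\|^2h^2+h\,J_h(z).
\end{align*}
It then remains to bound the error $J_h(z)$: the number of region crossings along a short analytic curve segment is finite with a bound independent of $z$ (each region boundary is a quadric cone and $t\mapsto(e^{A_{i^*}t}z)^TQ(e^{A_{i^*}t}z)$ is a nontrivial exponential polynomial), and each jump is controlled by $\max_i\|A_i\|$ and $\max_{P\in\mathcal{H}_N}\|P\|$, the latter bounded a priori by the optimal-cost estimate $\|P\|\le C^2/(1-e^{-2\gamma h})$ coming from exponential stabilizability. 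Granting that this forces $C_N\|z\|^2h^2+h\,J_h(z)\le\tfrac12\kappa(h)\|z\|^2$ once $h$ is small enough and $N\ge N_0(h)$, we get $DV^h_N(z;A_{i^*}z)\le-\tfrac{\kappa(h)}{2h}\|z\|^2$, hence $\min_{i\in\Q}DV^h_N(z;A_iz)\le-\tfrac{\kappa(h)}{2h}\|z\|^2$ for all $z$, i.e.\ condition~(\ref{cond:inf}) with $W(z)=\tfrac{\kappa(h)}{2h}\|z\|^2$. If the estimate only yields $\min_iDV^h_N(z;A_iz)<0$ for each $z\neq0$, one finishes by homogeneity and compactness: $z\mapsto\min_iDV^h_N(z;A_iz)$ is homogeneous of degree two and lower semicontinuous (the active index set can only shrink under small perturbations), so being negative everywhere on the unit sphere it is bounded above there by some $-c<0$, and then $W(z)=c\|z\|^2$ works.

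I expect the main obstacle to be precisely the quantitative control of $J_h(z)$, because of the coupling of $h$ and $N$: forcing $h\to0$ makes $N=N(h)\to\infty$ unavoidable (else the discrete margin $\kappa$ collapses), and the quadratics defining $V^h_N$ then have norm of order $1/h$, so individual derivative jumps are also of order $1/h$ and must be weighed against the $\kappa/h$ margin. Carrying this out sharply — rather than at the level of orders of magnitude — is where the value-function structure (the dynamic-programming recursion and the a priori optimal-cost bounds) has to be used, not merely the fact that $V^h_N$ is some pm-PQF. A secondary technical point is the handling of the nongeneric directions in which the curve $e^{A_{i^*}t}z$ runs along a region boundary, so that the identity $\dot\psi(0^+)=DV^h_N(z;A_{i^*}z)$ must be read one-sidedly.
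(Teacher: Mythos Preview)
Your overall plan and your diagnosis of the obstacle are both right: with only $\|P\|=O(1/h)$ and a bounded crossing count you get $hJ_h(z)=O(\|z\|^2)$, the \emph{same} order as the margin $\kappa(h)\|z\|^2$, so the inequality does not close. The missing ingredient is exactly the one you name but do not supply --- a sharper bound coming from the value-function structure rather than from generic pm-PQF properties.

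The paper bypasses the path integral and the jump bookkeeping entirely by comparing only the two endpoints. With $P$ the minimizer at $z$ and $P'$ the minimizer at $e^{A_{i}h}z$, set $\Delta P:=P'-P$ and Taylor-expand $e^{A_ih}=I+hA_i+o(h)$ directly inside the discrete inequality to obtain
\[
\min_{i} DV^h_N(z;A_iz)\;+\;\min_i\Big\{\tfrac{1}{h}\,z^T\Delta P\,z+z^T(A_i^T\Delta P+\Delta P A_i)z\Big\}\;\le\;-\kappa\|z\|^2 .
\]
The term $\tfrac{1}{h}z^T\Delta Pz$ is $\ge 0$ by minimality of $P$ at $z$ (this is your downward-jump sign, used once rather than along the whole curve). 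The cross term has size $O(\|\Delta P\|)\,\|z\|^2$, and the decisive estimate --- the step your proposal lacks --- is $\|\Delta P\|=O(1)$ uniformly as $h\to0$. The paper extracts this from monotonicity of $V^h_N$ in $N$: first $\|P\|=O(1/h)$ (as you note), and then a Lipschitz-type bound $|V^h_N(y)-V^h_N(y')|=O(1/h)\,\|y-y'\|$, which applied with $y=z$, $y'=e^{A_ih}z$ (so $\|y-y'\|=O(h)\|z\|$) gives $\|(e^{A_ih})^TP'e^{A_ih}-P\|=O(1)$ and hence $\|\Delta P\|=O(1)$ after one more Taylor step. With $\|\Delta P\|=O(1)$ the bracketed quantity is nonnegative for $h$ small, and $\min_i DV^h_N(z;A_iz)\le-\kappa\|z\|^2$ drops out directly. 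In short, the $O(1)$ bound on $\Delta P$ is the key idea; once you have it, the single-endpoint algebraic comparison is both simpler and sharper than tracking intermediate region crossings, and your $J_h$ difficulty disappears.
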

\begin{proof}
Obviously, $V^h_N$ satisfies the positive definite condition~(\ref{cond:pd}) and the radially unbounded condition~(\ref{cond:prop}). We are left to show that it also satisfies the decreasing condition~(\ref{cond:inf}). By the assumption of exponential discrete switching stabilizability, there exist constants $h_0>0, C>0,\gamma>0,\kappa>0$ such that for any DTSLS~(\ref{eq:dt}) with $h\in (0,h_0)$, there exists a switching sequence $\sigma$ under which the state trajectory satisfies $\|x^h(k;z,\sigma)\|\le Ce^{-\gamma hk}\|z\|,\forall z\in\R^n,\forall k\in\N$. Furthermore, there exists a $N<\infty$ such that
\begin{align*}
\min_{i\in\Q}\{V_N^h(e^{A_ih}z)-V_N^h(z)\} \le -\kappa h\|z\|,\forall z\in\R^n.
\end{align*}
Since $V_N^h=\min_{P\in \mathcal{H}_N}z^TPz$, we have
\begin{align*}
\min_{i\in\Q}\{z^T(e^{A_ih})^TP'e^{A_ih}z-z^TPz\} \le -\kappa h\|z\|, \text{ where } \\ P\triangleq \argmin_{P\in \mathcal{H}_N}z^TPz, P'\triangleq \argmin_{P\in \mathcal{H}_N}z^T(e^{A_ih})^TPe^{A_ih}z.
\end{align*}
By Taylor expansion, $e^{A_ih}=I+A_ih+o(h^2)$, which gives
\begin{align*}
\min_{i\in\Q}\{z^T(P'-P)z+hz^T(A_i^TP'+P'A_i)z\} \le -\kappa h\|z\|.
\end{align*}
Note that the directional derivative of $V_N^h$ at $z$ takes the form of $DV_N^h(z;A_iz)=z^T(A_i^TP+PA_i)z$. Then,
\begin{multline}
\min_{i\in\Q} DV_N^h(z;A_iz)+\min_{i\in\Q} \big\{\frac{1}{h} z^T(P'-P)z+ \\ z^T\big(A_i^T(P'-P)+(P'-P)A_i\big)z\big\} \le -\kappa\|z\|. \label{eq:minDV}
\end{multline}
Let $\Delta P\triangleq P'-P$. We next discuss the order of $\|P\|$ and $\|\Delta P\|$ for their dependency on $h$. We claim that i) $\|P\|=O(\frac{1}{h})$ and ii) $\|\Delta P\|=O(1)$. The proof goes as follows: i) By monotonicity of value function in terms of horizon, $V^h_N(z)=z^TPz=O(\frac{1}{1-e^{-\gamma h}})\cdot \|z\|$. Thus, $\|P\|=O(\frac{1}{1-e^{-\gamma h}})\to O(\frac{1}{h})$ as $h\to 0$. ii) Again by monotonicity property, $|V^h_N(e^{A_ih}z)-V^h_N(z)|=|z^T(e^{A_ih})^TP'e^{A_ih}z-z^TPz|=O(\frac{1}{1-e^{-\gamma h}})\cdot \|e^{A_ih}z-z\|$. It then gives $\|(e^{A_ih})^TP'e^{A_ih}-P\|=O(\frac{1}{1-e^{-\gamma h}})\cdot O(h)$. Note that $\|(e^{A_ih})^TP'e^{A_ih}-P\|=\|\Delta P+h(A_i^TP'+P'A_i)+o(h^2)\|\ge \|\Delta P\|-h\|A_i^TP'+P'A_i\|$, where the inequality is due to matrix norm triangle inequality. By reorganizing terms, we have $\|\Delta P\|\le O(\frac{1}{1-e^{-\gamma h}})\cdot O(h)+h\|A_i^TP'+P'A_i\|=O(\frac{1}{1-e^{-\gamma h}})\cdot O(h)+O(h)\cdot O(\frac{1}{1-e^{-\gamma h}})\to O(1)$ as $h\to 0$, where the equality is due to the order of $\|P\|$ discussed in i). Based on the property of pointwise minimum that $z^T\Delta Pz\ge 0$ and the fact that $\|\Delta P\|=O(1)$ we just proved, there exists a sufficiently small $h>0$ such that $z^T(\frac{1}{h}\Delta P+A_i^T\Delta P+\Delta PA_i)z>0,\forall z\in\R^n,\forall i\in\Q$. In other words, $\min_{i\in\Q} \{\frac{1}{h} z^T\Delta Pz+z^T(A_i^T\Delta P+\Delta PA_i) z\}\ge 0$. Together with (\ref{eq:minDV}) we have $\min_{i\in\Q} DV_N^h(z;A_iz)\le -\kappa\|z\|,\forall z\in\R^n$, which completes the proof.
\end{proof}

\begin{thm}[Converse pm-PQCLF Theorem]
\label{thm:converseclf}
If system~(\ref{eq:ol}) is switching stabilizable, then it admits a pm-PQCLF.
\end{thm}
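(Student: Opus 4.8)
The plan is to obtain this as a direct concatenation of the two main results already proved in Sections~III and~IV, followed by a short verification of the remaining items in Definition~\ref{def:ptminclf}. First I would invoke Theorem~\ref{thm:sstodss}: since system~(\ref{eq:ol}) is switching stabilizable, it is exponentially discrete switching stabilizable, so there is an $h_0>0$ such that each sampled DTSLS~(\ref{eq:dt}) with $h\in(0,h_0)$ admits an exponentially stabilizing switching sequence. Next I would feed this into Lemma~\ref{lem:dssclf}, which supplies a sufficiently small $h\in(0,h_0)$ and a sufficiently large horizon $N$ for which the finite-horizon value function $V^h_N$ satisfies the decreasing condition $\min_{i\in\Q}DV^h_N(z;A_iz)\le-\kappa\|z\|$ for all $z\in\R^n$ and some $\kappa>0$. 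I would then fix such $h$ and $N$, set $V\triangleq V^h_N$, and take $W(x)\triangleq\kappa\|x\|$.

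It then remains to check that $V$ meets every requirement of Definition~\ref{def:ptminclf}. By the cited structural lemma of~\cite{ZHA12}, $V(z)=\min_{P\in\mathcal{H}_N}z^TPz$ with $\mathcal{H}_N$ a finite set of positive definite matrices; discarding any $P$ whose region $\Omega_j$ (in the sense of Definition~\ref{def:ptmin}) is empty leaves the function unchanged, so without loss of generality $V$ is a pm-PQF. Positive definiteness~(\ref{cond:pd}) is immediate from positive definiteness of each $P\in\mathcal{H}_N$, together with the fact that $W(x)=\kappa\|x\|$ is continuous and strictly positive for $x\neq0$. Since $V(x)\ge\big(\min_{P\in\mathcal{H}_N}\lambda_{\min}(P)\big)\|x\|^2$ with the leading constant positive, every sublevel set $\mathcal{L}_\beta$ is bounded, which gives the radially unbounded condition~(\ref{cond:prop}). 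Finally, the decreasing condition~(\ref{cond:inf}) is precisely the inequality handed over by Lemma~\ref{lem:dssclf} with this choice of $W$. Hence $V=V^h_N$ is a pm-PQCLF for system~(\ref{eq:ol}).

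I do not expect a genuine obstacle here: essentially all the difficulty has already been absorbed, on the one hand into Theorem~\ref{thm:sstodss} (the continuous-to-discrete reduction built on the improved chattering Lemma~\ref{lem:chattering}), and on the other into Lemma~\ref{lem:dssclf} (the order estimates $\|P\|=O(1/h)$ and $\|\Delta P\|=O(1)$ that make the error term absorbable for small $h$). The only care needed in the present step is the bookkeeping above: trimming the pointwise-minimum representation of $V^h_N$ to an irredundant one so that it is literally a pm-PQF, and checking that the $W$ read off from~(\ref{cond:dtinf}), namely $\kappa\|x\|$, is an admissible comparison function (continuous, positive away from the origin, vanishing at the origin), which it plainly is.
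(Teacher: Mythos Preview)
Your proposal is correct and follows exactly the paper's own route: the proof of Theorem~\ref{thm:converseclf} in the paper is precisely the two-line concatenation of Theorem~\ref{thm:sstodss} and Lemma~\ref{lem:dssclf}. The extra bookkeeping you spell out (trimming redundant $P$'s, taking $W(x)=\kappa\|x\|$, verifying (\ref{cond:pd})--(\ref{cond:prop}) from positive definiteness of each $P\in\mathcal{H}_N$) is correct and is either stated as ``obvious'' in the proof of Lemma~\ref{lem:dssclf} or left implicit there.
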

\begin{proof}
If system~(\ref{eq:ol}) is switching stabilizable, then it is exponentially discrete switching stabilizable (Theorem~\ref{thm:sstodss}). Furthermore, the $V^h_N$ with sufficiently small $h$ and sufficiently large $N$ is a pm-PQCLF for system~(\ref{eq:ol}) (Lemma~\ref{lem:dssclf}).
\end{proof}
Theorem~\ref{thm:converseclf} provides a formal justification for many existing works that have adopted quadratic or piecewise quadratic CLFs for simplicity or heuristic reasons~\cite{liberzon2003switching,WPD94,WPD98,HB98,SESP99,DBPL00,P03,HML08,LA09}. It allows us to only focus on pm-PQCLFs in the study of switching stabilizability for continuous-time SLSs. 

\section{Equivalent Characterizations for Switching Stabilizability}
The goal of this section is to prove the equivalence of the four switching stabilizability definitions and show that they all imply the existence of a pm-PQCLF. We first introduce several lemmas to show some key pairwise relations among them.

The relation between the most general switching stabilizability and feedback stabilizability in Filippov sense has not been adequately studied in the literature. It turns out that feedback stabilizability in Filippov sense implies switching stabilizability. Such a result justifies the generality of the switching stabilizability defined on measurable signals. The main idea of the proof is to think of Filippov solution as a solution to the relaxed system $(\mathcal{R})$ and use Lemma~\ref{lem:chattering} to construct measurable switching signals.
\begin{lem}
\label{lem:fstoss}
If system~(\ref{eq:ol}) is feedback stabilizable in Filippov sense, then it is switching stabilizable. 
\end{lem}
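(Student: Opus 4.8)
The plan is to interpret any closed-loop Filippov trajectory of~(\ref{eq:cl}) as a trajectory of the relaxed system $(\mathcal{R})$ driven by a measurable relaxed control, and then invoke the improved chattering lemma (Lemma~\ref{lem:chattering}) to replace that relaxed control by a piecewise-constant pure control signal that stabilizes the pure system $(\mathcal{P})$, i.e.\ the original SLS~(\ref{eq:ol}). First I would observe that for the switched vector field $X_\nu(x) \triangleq A_{\nu(x)}x$, each value $X_\nu(y) = A_{\nu(y)}y$ lies in $\{A_i x : i \in \Q\}$ up to the continuity correction, so the Filippov set-valued map satisfies $F[X_\nu](x) \subseteq \co{A_1 x, \dots, A_M x} = \{\sum_i \alpha_i A_i x : \alpha \in \mathcal{U}_r\}$ for every $x$; this is a standard property of Filippov maps of vector fields with finitely many "branches" and uses that each $A_i x$ is continuous in $x$. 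Consequently, if $x(\cdot)$ is a closed-loop Filippov trajectory with $\dot x(t) \in F[X_\nu](x(t))$ a.e., then by a measurable selection argument (Filippov's lemma / the measurable selection theorem applied to the set-valued map $t \mapsto \{\alpha \in \mathcal{U}_r : \dot x(t) = \sum_i \alpha_i A_i x(t)\}$) there exists a measurable $\alpha^r : \R_+ \to \mathcal{U}_r$ with $\dot x(t) = \sum_i \alpha_i^r(t) A_i x(t)$ a.e., so $x(\cdot) = x(\cdot\,; z, \alpha^r)$ is a trajectory of $(\mathcal{R})$.

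Next I would upgrade the hypothesis from asymptotic stability of the Filippov trajectories to \emph{exponential} stability of a suitable relaxed control, which is what Lemma~\ref{lem:chattering} requires. Feedback stabilizability in Filippov sense gives asymptotic stability of $(\mathcal{R})$ in the sense of asymptotic controllability (the existence of $\alpha^r$ above, for each $z$, with $x(\cdot\,;z,\alpha^r)$ converging to the origin while staying small). Since $(\mathcal{R})$ is homogeneous with vector field continuous in state and control, Lemma~\ref{lem:homo} applies: asymptotic controllability of $(\mathcal{R})$ implies it is exponentially feedback stabilizable in S-H sense with bounded sampling rate, so there is a feedback $\nu_r : \R^n \to \mathcal{U}_r$, constants $h_0, C, \gamma > 0$, and a nonvanishing sampling schedule $\pi$ under which the $\pi$-solution is exponentially stable. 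Defining $\alpha^r(t) \triangleq \nu_r(x_\pi(t_k; z, \nu_r))$ on $[t_k, t_{k+1})$ yields an exponentially stabilizing relaxed control signal, exactly as in the proof of Lemma~\ref{lem:sspwconst}. From here the argument is essentially identical to that of Lemma~\ref{lem:sspwconst}: fix a horizon $T > \frac{2\log(2C)}{\gamma}$, apply Lemma~\ref{lem:chattering} with $\epsilon = Ce^{-\gamma T}$ to obtain $\alpha^{p,0} \in \S_p^+$ on $[0,T]$ with $\|x(t;z,\alpha^{p,0}) - x(t;z,\alpha^r)\| < \epsilon\|z\|$, concatenate over successive intervals of length $T$ to get $\alpha^p \in \S_p^+$ with $\|x(t;z,\alpha^p)\| < e^{-\gamma' t}\|z\|$ for $\gamma' = \frac{\gamma}{2} - \frac{\log(2C)}{T} \in (0,\gamma)$, and finally read off the measurable switching signal $\sigma(t) = i$ whenever $\alpha^p_i(t) = 1$. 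This $\sigma \in \S_p^+ \subseteq \S_m$ stabilizes~(\ref{eq:ol}) (in fact exponentially), so the system is switching stabilizable in the sense of Definition~\ref{def:ss}.

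The main obstacle I anticipate is the first step: rigorously producing the measurable relaxed control $\alpha^r$ from the Filippov trajectory, and in particular nailing down the inclusion $F[X_\nu](x) \subseteq \{\sum_i \alpha_i A_i x : \alpha \in \mathcal{U}_r\}$. The subtlety is that $F[X_\nu](x)$ is defined by intersecting convex closures of $X_\nu$ over punctured neighborhoods with measure-zero sets removed, and one must check that near $x$ the reachable values of $X_\nu$ are, up to arbitrarily small error, contained in $\{A_i x : i \in \Q\}$ — this uses uniform continuity of each map $y \mapsto A_i y$ on a neighborhood of $x$, so that $A_{\nu(y)} y$ is close to $A_{\nu(y)} x \in \{A_i x\}$ when $y$ is close to $x$; taking closed convex hulls and then the intersection over shrinking neighborhoods delivers the claimed inclusion. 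Once this containment is in hand, the measurable selection yielding $\alpha^r$ is routine, and the remainder reduces cleanly to the already-established Lemmas~\ref{lem:chattering}, \ref{lem:homo}, and the argument of Lemma~\ref{lem:sspwconst}.
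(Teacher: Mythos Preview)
Your proposal is correct, and it shares with the paper the key first observation: any closed-loop Filippov trajectory of~(\ref{eq:cl}) is a trajectory of the relaxed system $(\mathcal{R})$, because $F[X_\nu](x)\subseteq\co{A_1x,\dots,A_Mx}$. Where you diverge is in how you convert this into a pure stabilizing signal. You pass through Lemma~\ref{lem:homo} to upgrade asymptotic controllability of $(\mathcal{R})$ to exponential S-H stabilizability, and then essentially replay the proof of Lemma~\ref{lem:sspwconst}. The paper instead works directly with the Filippov trajectory: from asymptotic stability it extracts a single $T>0$ with $\|x(T;z,\nu)\|\le\tfrac{1}{2}\|z\|$ for all $z$, then applies Lemma~\ref{lem:chattering} on each interval $[kT,(k+1)T)$ with the shrinking tolerance $\epsilon=2^{-(k+1)}$, obtaining $\|\phi((k+1)T)\|\le(\tfrac{1}{2}+2^{-(k+1)})\|\phi(kT)\|$ and hence $\|\phi(t)\|\le(\tfrac{3}{4})^{\lfloor t/T\rfloor-1}\|z\|$. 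The paper's route is shorter and avoids invoking Lemma~\ref{lem:homo} altogether; your route is more modular, reduces the work cleanly to already-proved lemmas, and makes the exponential rate explicit from the outset rather than emerging from the telescoping product. Both are valid, and your anticipated ``main obstacle'' (the containment of the Filippov map in the simplex of subsystem vector fields and the measurable selection of $\alpha^r$) is handled the same way in both arguments.
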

\begin{proof}
Assume system~(\ref{eq:ol}) is feedback stabilizable in Filippov sense. There exist a stabilizing feedback law $\nu:\R^n\to\Q$ and a constant $T>0$ such that $\|x(t;z,\nu)\|\le \frac{1}{2}\|z\|,\forall t\ge T,\forall z\in\R^n$. We now fix the finite time horizon $T$ and construct a stabilizing switching signal $\sigma:\R_+\to\Q$ recursively on intervals of length $T$. Let $\phi(\cdot)\triangleq x(\cdot;z,\sigma):\R_+\to\R^n$ be the state trajectory of system~(\ref{eq:ol}) under $\sigma$. Since the velocity of a Filippov solution can always be written as the convex combination of subsystem vector fields, i.e., $\dot{x}(t;z,\nu)=\sum_{i\in\Q} \alpha_i(t)A_ix(t;z,\nu)$ where $\sum_{i\in\Q}\alpha_i(t)=1, \forall t\in\R_+$, we can think of $x(\cdot;z,\nu):\R_+\to\R^n$ as a stabilizing trajectory of the relaxed system~$(\mathcal{R})$. By Lemma~\ref{lem:chattering}, $\forall z\in\R^n,\epsilon>0$, $\exists \sigma(z,\epsilon,\nu)\in\S_p^+$ s.t. $\|x(t;z,\sigma(z,\epsilon,\nu))-x(t;z,\nu)\|\le \epsilon\|z\|,\forall t\in[0,T]$, where the parenthesis in $\sigma(z,\epsilon,\nu)$ is used to emphasize the dependency of $\sigma$ on $z,\epsilon,\nu$. Let $\sigma_k\triangleq \sigma|_{[kT,(k+1)T]}:[0,T]\to\Q$ be the restriction of $\sigma$ on $[kT,(k+1)T]$. Consider the Filippov solution (also a relaxed trajectory) starting from the end point of the trajectory under $\sigma$ on the last interval, i.e. $x(\cdot;\phi(kT),\nu)$. By assumption, $\|x(T;\phi(kT),\nu)\|\le\frac{1}{2}\|\phi(kT)\|$. By Lemma~\ref{lem:chattering}, $\exists\sigma_k:[0,T]\to\Q$ where $\sigma_k\in\S_p^+$ s.t. $\|x(t;\phi(kT),\sigma_k)-x(t;\phi(kT),\nu)\|\le \frac{1}{2^{k+1}}\|\phi(kT)\|,\forall t\in[0,T]$. Thus, $\|\phi((k+1)T)\|=\|x(T;\phi(kT),\sigma_k)\|\le \|x(T;\phi(kT),\nu)\|+\|x(T;\phi(kT),\sigma_k)-x(t;\phi(kT),\nu)\|\le (\frac{1}{2}+\frac{1}{2^{k+1}})\|\phi(kT)\|,\forall k\in\N$ where $\phi(0)=z$. By construction, the state trajectory under $\sigma$ satisfies $\|\phi(t)\|\le \Pi_{k=0}^{\lfloor t/T\rfloor-1}(\frac{1}{2}+\frac{1}{2^{k+1}})\|z\|\le (\frac{3}{4})^{\lfloor t/T\rfloor-1}\|z\|\to 0$ as $t\to\infty$. Since $\sigma_k\in\S_p^+,\forall k\in\N$, we verified that $\sigma\in\S_p^+$.
\end{proof}

One sufficient condition for feedback stabilizability in Filippov sense is the existence of a pm-PQCLF~\cite{HML08}, in which a construction of stabilizing feedback law is also provided.
\begin{lem}[\cite{HML08}]
If system~(\ref{eq:ol}) admits a pm-PQCLF, then it is feedback stabilizable in Filippov sense.
\label{lem:clffsf}
\end{lem}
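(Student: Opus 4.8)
The statement to prove is: \emph{if system~(\ref{eq:ol}) admits a pm-PQCLF, then it is feedback stabilizable in Filippov sense}, which is cited from~\cite{HML08}. Since the result is borrowed, the proof should reconstruct the construction of the stabilizing feedback law from the pm-PQCLF and verify that closed-loop Filippov solutions are asymptotically stable. The plan is to define the feedback law $\nu$ pointwise by the min-selector associated with the decreasing condition~(\ref{cond:inf}), and then argue that $V$ is a genuine Lyapunov function along Filippov solutions, including along any sliding motion that the discontinuous feedback may induce.

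First I would fix the pm-PQF $V(x)=\min_{j\in\N_m}x^TP_jx$ and, for each $x$, define $\nu(x)\triangleq\argmin_{i\in\Q}DV(x;A_ix)$ (breaking ties by, say, lowest index), so that by~(\ref{cond:inf}) we have $DV(x;A_{\nu(x)}x)\le-W(x)$ for all $x$. One checks that $\nu$ is measurable: on each full-dimensional cell $\Omega_j$ the function $V$ agrees with the smooth $x^TP_jx$, the directional derivatives $DV(x;A_ix)=x^T(A_i^TP_j+P_jA_i)x$ are polynomials, and the argmin of finitely many polynomials is measurable; the cell boundaries $\partial\Omega_j$ have measure zero and can be handled arbitrarily. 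Hence a Filippov solution of~(\ref{eq:cl}) exists by the discussion preceding Definition~\ref{def:fs}.

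The core step is to show $V$ decreases along closed-loop Filippov trajectories. Let $x(\cdot)$ be such a trajectory; it is absolutely continuous, so $t\mapsto V(x(t))$ is absolutely continuous (as $V$ is locally Lipschitz, being a min of smooth functions), and hence $\frac{d}{dt}V(x(t))$ exists for a.e.\ $t$. At such $t$, $\dot x(t)\in F[A_{\nu(\cdot)}\,\cdot\,](x(t))$. I would split into two cases. If $x(t)\in\Omega_j$ for some $j$, then $V$ is smooth near $x(t)$ and $\nu\equiv$ const $=i^*$ on a neighborhood (since $\Omega_j$ is open and on it the argmin is determined by the single active quadratic — here one needs that the selector is locally constant on $\Omega_j$, which holds off a measure-zero polynomial set where two of the $DV(\cdot;A_i\cdot)$ coincide; on that exceptional set, treat it within the sliding argument below), so $\dot x(t)=A_{i^*}x(t)$ and $\frac{d}{dt}V(x(t))=DV(x(t);A_{i^*}x(t))\le-W(x(t))$. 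If $x(t)\in\partial\Omega_j$ or on the exceptional polynomial set, then $\dot x(t)=\sum_i\alpha_iA_ix(t)$ with $\alpha$ in the simplex supported on the indices $i$ that are active near $x(t)$, i.e.\ those for which $DV(x(t);A_ix(t))=\min_i DV(x(t);A_ix(t))\le-W(x(t))$ (only such indices get picked by $\nu$ arbitrarily close to $x(t)$); by linearity of $v\mapsto DV(x;v)$ on each piece and convexity, $\frac{d}{dt}V(x(t))=DV(x(t);\dot x(t))\le\sum_i\alpha_i DV(x(t);A_ix(t))\le-W(x(t))$. \textbf{This sliding-mode case is the main obstacle}: one must verify that the Filippov convex hull at a boundary point only mixes vector fields whose indices are actually selected by $\nu$ in a neighborhood — which requires a careful description of the active index set near $\partial\Omega_j$ and near the tie-set of the $DV(\cdot;A_i\cdot)$'s — and that $DV(x;\cdot)$ is linear (not merely positively homogeneous) on the relevant cone so that Jensen-type convexity applies.

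Once $\frac{d}{dt}V(x(t))\le-W(x(t))<0$ for a.e.\ $t$ (for $x(t)\neq0$) is established, the conclusion follows from a standard nonsmooth Lyapunov argument: $V(x(t))$ is nonincreasing, so by~(\ref{cond:prop}) trajectories starting in a sublevel set stay bounded, giving the $\epsilon$--$\delta$ stability (choose $\delta$ so that $\{\|z\|<\delta\}\subset\mathcal{L}_\beta\subset\{\|x\|<\epsilon\}$ using positive definiteness and continuity of $V$); and attractivity follows by a LaSalle-type / integral argument — $\int_0^\infty W(x(t))\,dt\le V(z)<\infty$ forces $\liminf_{t\to\infty}W(x(t))=0$, and combined with boundedness and $V$ nonincreasing one gets $x(t)\to0$. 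I would cite~\cite{C09} or~\cite{subbotin1995generalized} for the chain-rule / monotonicity facts for locally Lipschitz $V$ along absolutely continuous curves to keep this last part short, since the substance of the lemma is the feedback construction and the sliding-mode decrease estimate.
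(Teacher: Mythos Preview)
The paper itself gives no proof of this lemma: it is stated as a citation of~\cite{HML08} and used as a black box in the equivalence diagram. So there is no ``paper's own proof'' to compare against; your proposal is essentially an attempt to reconstruct the argument of~\cite{HML08}.

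On its own merits, your outline is largely sound (feedback law $\nu(x)=\argmin_i DV(x;A_ix)$, measurability of $\nu$, Lipschitzness of $V$ so $t\mapsto V(x(t))$ is absolutely continuous, and a standard Lyapunov/LaSalle wrap-up), but the sliding-mode estimate you write down has a genuine gap. For a pm-PQF, the map $v\mapsto DV(x;v)=\min_{j\in J(x)}\nabla V_j(x)\cdot v$ is a pointwise minimum of linear functionals, hence \emph{concave}, not convex. Consequently the inequality you use,
\[
DV\bigl(x;\textstyle\sum_i\alpha_iA_ix\bigr)\le\sum_i\alpha_i\,DV(x;A_ix),
\]
goes the wrong way; in general only the reverse inequality holds. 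Your appeal to ``linearity on each piece'' does not save this at a point where $|J(x)|>1$, which is exactly the boundary case you are trying to handle. Concretely, two modes $i_1,i_2$ may both be selected near $x$ with $DV(x;A_{i_1}x)\le -W(x)$ witnessed by one active $j_1$ and $DV(x;A_{i_2}x)\le -W(x)$ witnessed by a different active $j_2$; a convex combination $\alpha_1A_{i_1}x+\alpha_2A_{i_2}x$ need not make \emph{any single} $V_j$ decrease at rate $-W(x)$, so your bound does not follow.

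What is actually needed (and what~\cite{HML08} does) is a stronger pointwise statement tying the selected mode to each active quadratic piece, together with a set-valued derivative / Clarke-gradient argument: at a.e.\ $t$ where $\frac{d}{dt}V(x(t))$ exists one has $\nabla V_j(x(t))\cdot\dot x(t)$ equal for \emph{all} active $j$, and the Filippov set at $x$ only contains $A_ix$ for indices $i$ that, by the selection rule, satisfy $\nabla V_{j}(x)\cdot A_ix\le -W(x)$ for the $j$ active on the cell from which $i$ was approached. Closing the loop requires either the set-valued Lie derivative machinery (as in Bacciotti--Ceragioli, cf.~\cite{C09}) or the specific matrix-inequality structure exploited in~\cite{HML08}; your convexity line does not substitute for this.
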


Based on the converse pm-PQCLF theorem and the two lemmas introduced above, we can claim the equivalence of switching stabilizability, feedback stabilizability in Filippov sense and the existence of a pm-PQCLF. It remains to establish their relation to exponential feedback stabilizability in S-H sense with bounded sampling rate. To this end, we use the admitted pm-PQCLF to generate a switching law and show that the switching law guarantees exponential stability of all the sampled closed-loop trajectories with sufficiently small intersampling time. The proof uses the decreasing condition~(\ref{cond:inf}) to show the exponential convergence of pm-PQCLF along the closed-loop $\pi$-trajectory under the constructed switching law.
\begin{lem} 
\label{lem:clffssh}
If system~(\ref{eq:ol}) admits a pm-PQCLF, then it is exponentially feedback stabilizable in sample-and-hold sense with bounded sampling rate.
\end{lem}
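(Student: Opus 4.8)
The plan is to use the admitted pm-PQCLF $V(x)=\min_{j\in\N_m}x^TP_jx$ together with the continuous function $W$ from Definition~\ref{def:ptminclf} to build a state-feedback switching law $\nu$ and then show that, for a sampling schedule of sufficiently small diameter, $V$ decreases geometrically along the $\pi$-solution. The feedback law is the natural one suggested by the decreasing condition~\eqref{cond:inf}: set $\nu(x)\in\argmin_{i\in\Q}DV(x;f_i(x))$ for $x\neq 0$ (breaking ties arbitrarily), so that $DV(x;f_{\nu(x)}(x))\le -W(x)$ everywhere. First I would exploit homogeneity: both $V$ and $W$ can be taken $2$-homogeneous (for $W$ one replaces the given $W$ by $c\|x\|^2$ for small $c>0$, using that on the unit sphere $W$ is bounded below by a positive constant and $V$ is bounded above, together with the quadratic bounds $c_1\|x\|^2\le V(x)\le c_2\|x\|^2$ coming from positive definiteness and boundedness of sublevel sets). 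Thus there is a constant $\lambda>0$ with $\min_{i\in\Q}DV(x;A_ix)\le -\lambda V(x)$ for all $x$, and the directional derivative $DV(x;A_ix)$ is itself $2$-homogeneous in $x$.

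Next I would carry out the sampling estimate on one interval $[t_k,t_{k+1}]$ with $t_{k+1}-t_k\le h\le h_0$. Fix $z_k\triangleq x_\pi(t_k)$, let $i_k\triangleq\nu(z_k)$, and let $x(\cdot)$ solve $\dot x=A_{i_k}x$ on $[t_k,t_{k+1}]$ with $x(t_k)=z_k$; this is a classical linear ODE, so no Filippov subtleties arise inside a sampling interval. The key point is to compare $V(x(t_k+s))$ with $V(z_k)+sDV(z_k;A_{i_k}z_k)$. Since $V$ is piecewise quadratic (a minimum of finitely many quadratics), along the smooth curve $x(t)$ the map $t\mapsto V(x(t))$ is piecewise-$C^1$ with right derivative $DV(x(t);A_{i_k}x(t))$, and one obtains $V(x(t_k+h))-V(z_k)=\int_0^h DV(x(t_k+s);A_{i_k}x(t_k+s))\,ds$. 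Using $\|x(t_k+s)-z_k\|\le (e^{L_1 s}-1)\|z_k\|\le L_1 s\,e^{L_1 h}\|z_k\|$ with $L_1=\max_i\|A_i\|$, together with a local Lipschitz bound on $DV(\cdot;A_{i_k}\cdot)$ on bounded sets (each $P_j$ gives a quadratic, so the integrand is piecewise quadratic in $x$, hence Lipschitz on compacts), the integrand stays within $O(s)\|z_k\|^2$ of its value $DV(z_k;A_{i_k}z_k)\le -\lambda V(z_k)\le -\lambda c_1\|z_k\|^2$ at $s=0$. Hence $V(x(t_k+h))\le (1-\lambda h+O(h^2))V(z_k)\le e^{-\lambda h/2}V(z_k)$ once $h$ is below some universal $h_0>0$ (universal because, by $2$-homogeneity of $V$, $W$ and the integrand, all the $O(h^2)$ constants scale with $\|z_k\|^2$ and the ratio is scale-invariant — this is exactly why the bound on the sampling rate is uniform rather than depending on $\delta,\epsilon$).

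Chaining this over $k$ gives $V(x_\pi(t_k))\le e^{-\lambda t_k/2}V(z)$, and since intervals have length at most $h_0$ and $V$ is continuous and $2$-homogeneous one upgrades this to $V(x_\pi(t))\le C' e^{-\lambda t/2}V(z)$ for all $t$, whence $\|x_\pi(t;z,\nu)\|\le C e^{-\gamma t}\|z\|$ with $\gamma=\lambda/4$, $C=\sqrt{C'c_2/c_1}$, which is precisely exponential feedback stabilizability in S-H sense with bounded sampling rate. The main obstacle is the one-step estimate: one must handle the fact that $t\mapsto V(x(t))$ is only piecewise differentiable because the trajectory can cross the switching surfaces $\partial\Omega_j$ between different quadratic pieces. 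I would argue that the boundary crossings form a closed set of times and on each open subinterval $V(x(t))=x(t)^TP_{j}x(t)$ for a fixed $j$, so $V\circ x$ is absolutely continuous with a.e.-derivative equal to $\nabla(x^TP_jx)\cdot A_{i_k}x = DV(x;A_{i_k}x)$ (using that at a crossing point the one-sided derivatives agree with the active-piece derivative, which follows from the directional-derivative formula $DV(x;\eta)=\min\{2x^TP_j\eta: j\in\arg\min_k x^TP_kx\}$ of Lemma~\ref{lem:dd}); integrating the a.e.-derivative then yields the integral identity used above. A secondary technical point is making the Lipschitz/second-order constant genuinely uniform over all $z_k$, which the homogeneity scaling delivers: it suffices to prove everything for $\|z_k\|=1$ and then rescale.
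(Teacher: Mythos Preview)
Your proposal is correct and follows essentially the same route as the paper: define $\nu(x)\in\argmin_i DV(x;A_ix)$, use homogeneity to replace $W$ by a multiple of $V$ (the paper writes this as $\min_i DV(z;A_iz)\le -3\kappa V(z)$), and on each sampling interval control $V(x_\pi(t_k+h))-V(z_k)$ via the integral of $DV$ along the linear flow $e^{A_{i_k}t}z_k$, bounding the deviation from $DV(z_k;A_{i_k}z_k)$ by an $O(h)\|z_k\|^2$ term coming from the explicit quadratic form $A_i^T(A_i^TP_k+P_kA_i)+(A_i^TP_k+P_kA_i)A_i$. Your treatment of the absolute continuity of $t\mapsto V(x(t))$ across the surfaces $\partial\Omega_j$ is in fact more careful than the paper's, which simply writes the integral identity and invokes the Mean Value Theorem without comment.
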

\begin{proof}
Let $V$ be a pm-PQCLF. By the property of pm-PQCLF, there exists $0<C_V^-<C_V^+<\infty$ such that $C_V^-\|z\|^2<V(z)<C_V^+\|z\|^2,\forall z\in\R^n$. According to the decreasing condition~(\ref{cond:inf}), there exists $\kappa>0$ such that 
\begin{align*}
\min_{i\in\Q} DV(z;A_iz) \le -3\kappa V(z),\forall z\in\R^n.
\end{align*}
For each $\kappa>0$, we can find an $h_0$ such that $0<h_0\le \kappa C_V^-/\max_{i\in\Q,k\in\N_m}\|A_i^T(A_i^TP_k+P_kA_i)+(A_i^TP_k+P_kA_i)A_i\|$ and $1-2\kappa h_0 \le e^{-2\kappa h_0}$. Let the switching law $\nu:\R^n\to\Q$ be 
\begin{align*}
\nu(z)=\argmin_{i\in\Q} DV(z;A_iz),\forall z\in\R^n. 
\end{align*}
Consider a sampling schedule $\pi=\{t_k\}_{k\in\N}$ with $d(\pi)<h_0$. It follows from the definition of S-H solution that for any $\tau\in(0,h_0)$,
\begin{align*}
V(x_\pi(\tau;z,\nu))=V(z)+\int_0^\tau DV(e^{A_{\nu(z)}t}z;A_{\nu(z)}e^{A_{\nu(z)}t}z)dt\\
=V(z)+\tau DV(e^{A_{\nu(z)}t}z;A_{\nu(z)}e^{A_{\nu(z)}t}z) \text{ for some } t\in(0,\tau),
\end{align*}
where the last equality is due to Mean Value Theorem. For $0<t<\tau<h_0$, the directional derivative in the last equation can be bounded as follows.
\begin{multline*}
DV(e^{A_{\nu(z)}t}z;A_{\nu(z)}e^{A_{\nu(z)}t}z)\le DV(z;A_{\nu(z)}z)+\\t\cdot V(z)/C_V^-\cdot\|A_{\nu(z)}^T(A_{\nu(z)}^TP_k+P_kA_{\nu(z)})+\\(A_{\nu(z)}^TP_k+P_kA_{\nu(z)})A_{\nu(z)}\|\le -2\kappa V(z).
\end{multline*}
Thus, the value of $V$ along closed-loop $\pi$-solution satisfies $V(x_\pi(\tau;z,\nu))\le (1-2\kappa\tau) V(z)\le e^{-2\kappa\tau}V(z),\forall z\in\R^n,\forall\tau\in (0,h_0)$. By iteratively applying the above inequality on intervals $[t_k,t_{k+1}],k\in\N$ of length less than $h_0$, we have $V(x_\pi(t;z,\nu))\le e^{-2\kappa t}V(z),\forall t\in\R_+,\forall z\in\R^n$. By the bound of $V$, $\|x_\pi(t;z,\nu)\|\le Ce^{-\kappa t}\|z\|, \forall t\in\R_+,\forall z\in\R^n$, where $C\triangleq (C_V^+/C_V^-)^{\frac{1}{2}}$.
\end{proof}

We are now ready to state the main result of this paper, namely, the equivalence among all the four switching stabilizability definitions and the existence of a pm-PQCLF. The proof of the main result is illustrated by the diagram in Fig.~\ref{fig:diagram}. 
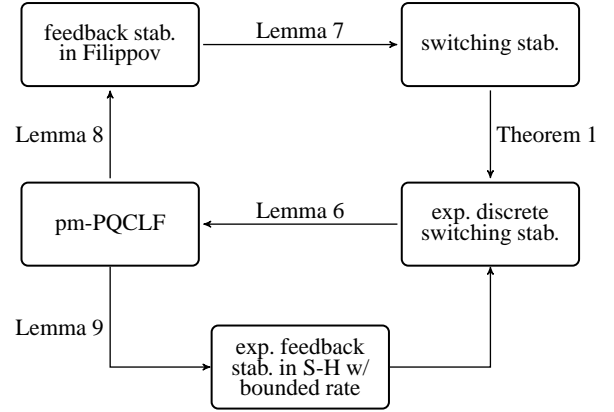
\begin{figure}[ht!]
\centering
\resizebox{0.9\linewidth}{!}{
\begin{tikzpicture}
 \node[block] (ctclf) {\Large pm-PQCLF};
 \node[right=2cm of ctclf] (temp) {};
 \node[block, right=2cm of temp] (dss) {\Large exp. discrete switching stab.};
 \node[block, above=2cm of ctclf] (fsf) {\Large feedback stab. in Filippov};
 \node[block, above=2cm of dss] (ss) {\Large switching stab.};
 \node[block, below=2cm of temp] (fssh) {\Large exp. feedback stab. in S-H w/ bounded rate};
 \path[link] (fsf) edge node [above] {\Large Lemma~\ref{lem:fstoss}} (ss); 
 \path[link] (ss) edge node [right] {\Large Theorem~\ref{thm:sstodss}} (dss); 
 \path[link] (dss) edge node [above] {\Large Lemma~\ref{lem:dssclf}} (ctclf); 
 \path[link] (ctclf) edge node [left] {\Large Lemma~\ref{lem:clffsf}} (fsf); 
 \draw[->,thick] (ctclf) |- node [pos=0.3, left] {\Large Lemma~\ref{lem:clffssh}} (fssh); 
 \draw[->,thick] (fssh) -| (dss); 
\end{tikzpicture}}
\caption{Relations of the statements in Theorem~\ref{thm:main}. ``stab.'' stands for stabilizability; ``exp.'' stands for exponentially.}
\label{fig:diagram}
\end{figure}
\begin{thm} 
\label{thm:main}
The following statements are equivalent for continuous-time switched linear system~(\ref{eq:ol}):
\begin{enumerate}[label=\roman*)]
\item It is switching stabilizable;
\item It is feedback stabilizable in Filippov sense;
\item It is exponentially feedback stabilizable in sample-and-hold sense with bounded sampling rate;
\item It is exponentially discrete switching stabilizable;
\item It admits a pm-PQCLF.
\end{enumerate}
\end{thm}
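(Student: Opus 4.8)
The plan is to close a cycle of implications among the five statements, drawing all but one of the arrows directly from results already in hand, and then to supply the single missing arrow, (iii)$\Rightarrow$(iv), by a short direct argument. Concretely, I would string together: (ii)$\Rightarrow$(i) by Lemma~\ref{lem:fstoss}; (i)$\Rightarrow$(iv) by Theorem~\ref{thm:sstodss}; (iv)$\Rightarrow$(v) by Lemma~\ref{lem:dssclf}; and (v)$\Rightarrow$(ii) by Lemma~\ref{lem:clffsf}. This quadrilateral already forces (i), (ii), (iv), (v) to be mutually equivalent. To fold in (iii), I would use (v)$\Rightarrow$(iii), which is exactly Lemma~\ref{lem:clffssh}, together with (iii)$\Rightarrow$(iv); any one of these plus the quadrilateral then yields the full equivalence. (As a sanity check, the arrow (i)$\Rightarrow$(v) is also available directly as Theorem~\ref{thm:converseclf}, but it is not needed for the cycle.)

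For the remaining arrow (iii)$\Rightarrow$(iv): assume the system is exponentially feedback stabilizable in S-H sense with bounded sampling rate, witnessed by a feedback law $\nu:\R^n\to\Q$, a threshold $h_0>0$, and constants $C,\gamma>0$ so that every $\pi$-solution with $d(\pi)<h_0$ satisfies $\|x_\pi(t;z,\nu)\|\le Ce^{-\gamma t}\|z\|$ for all $t\in\R_+$, $z\in\R^n$. Fix any $h\in(0,h_0)$ and take the uniform sampling schedule $\pi=\{kh\}_{k\in\N}$, which has $d(\pi)=h<h_0$. By the definition of the $\pi$-solution, on each interval $[kh,(k+1)h)$ the closed loop evolves as $\dot x=A_{\nu(x_\pi(kh;z,\nu))}x$, i.e.\ it is driven by the \emph{constant} index $\sigma_k\triangleq\nu(x_\pi(kh;z,\nu))\in\Q$. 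Hence $\sigma(t)\triangleq\sigma_k$ for $t\in[kh,(k+1)h)$ is piecewise constant with switching intervals of uniform length $h$, and the trajectory it generates from $z$ is precisely $x_\pi(\cdot;z,\nu)$, which is exponentially stable; taking $\delta=\epsilon/C$ meets the $(\epsilon,\delta)$ requirement of Definition~\ref{def:dss}. Since this works for every $h\in(0,h_0)$, the system is exponentially discrete switching stabilizable, establishing (iii)$\Rightarrow$(iv).

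I do not expect a genuine obstacle in assembling Theorem~\ref{thm:main}: the substantive work is entirely carried by Theorem~\ref{thm:sstodss}, Lemma~\ref{lem:dssclf}, and the pm-PQCLF lemmas, and the remaining step above is elementary. The one place to be careful is the quantifier bookkeeping in (iii)$\Rightarrow$(iv) and in checking that each cited implication delivers the right flavor: Definition~\ref{def:dss} permits the stabilizing switching signal to depend on the initial state, which is exactly what sampling a fixed feedback law produces, so no uniformity is lost; and the ``exponential'' qualifiers line up, since Lemma~\ref{lem:dssclf} consumes exponential discrete switching stabilizability and Lemma~\ref{lem:clffssh} (resp.\ the argument above) produces the exponential versions of (iii) and (iv). With these checks in place the six arrows close the loop and all five statements are equivalent.
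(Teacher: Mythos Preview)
Your proposal is correct and matches the paper's proof essentially line for line: the paper also closes the cycle via (ii)$\Rightarrow$(i) (Lemma~\ref{lem:fstoss}), (i)$\Rightarrow$(iv) (Theorem~\ref{thm:sstodss}), (iv)$\Rightarrow$(v) (Lemma~\ref{lem:dssclf}), (v)$\Rightarrow$(ii) (Lemma~\ref{lem:clffsf}), attaches (iii) via (v)$\Rightarrow$(iii) (Lemma~\ref{lem:clffssh}), and dispatches (iii)$\Rightarrow$(iv) in one line by choosing a uniform sampling schedule, exactly as you do. Your expanded treatment of (iii)$\Rightarrow$(iv) and the quantifier/exponential bookkeeping are sound and simply make explicit what the paper calls ``trivial.''
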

\begin{proof}
ii) $\Rightarrow$ i): It follows from Lemma~\ref{lem:fstoss} that if system~(\ref{eq:ol}) is feedback stabilizable in Filippov sense, then it is switching stabilizable. i) $\Rightarrow$ iv): It is shown in Theorem~\ref{thm:sstodss} that if system~(\ref{eq:ol}) switching stabilizable, then it is exponentially discrete switching stabilizable. iv) $\Rightarrow$ v): It is shown in Lemma~\ref{lem:dssclf} that if system~(\ref{eq:ol}) is exponentially discrete switching stabilizable, then it admits a pm-PQCLF. v) $\Rightarrow$ ii): It is proved in~ \cite{HML08} (Lemma~\ref{lem:clffsf}) that if system~(\ref{eq:ol}) admits a pm-PQCLF, then it is feedback stabilizable in Filippov sense. v) $\Rightarrow$ iii): It gives by Lemma~\ref{lem:clffssh} that if system~(\ref{eq:ol}) admits a pm-PQCLF, then it is exponentially feedback stabilizable in S-H sense with bounded sampling rate. iii) $\Rightarrow$ iv): It trivially holds by choosing sampling schedule $\pi$ with intersampling time of uniform length.
\end{proof}

Theorem~\ref{thm:main} shows the equivalence of the four switching stabilizability definitions and provides a unified sufficient and necessary condition, namely the existence of a pm-PQCLF, for all of them. We now have a guarantee that it suffices to only consider pm-PQCLFs in the stabilization of SLSs under various stabilizability notions.

\section{Conclusion}
This paper studies switching stabilization problems for continuous-time switched linear systems. We show the equivalence of the four switching stabilizability definitions and the existence of a pm-PQCLF. Such a result unifies the study of switching stabilizability under various assumptions on the switching control input. It also justifies many existing stabilization results that have used piecewise quadratic CLF for simplicity or heuristic reasons. Future work will focus on developing efficient algorithms to construct the proposed pm-PQCLF and the corresponding stabilizing feedback switching law.

\ifCLASSOPTIONcaptionsoff
  \newpage
\fi

\bibliographystyle{IEEEtran}
\bibliography{sls}
\end{document}